\def\avint{\mathop{\,\rlap{-}\!\!\int}\nolimits}
\def\R {\mathbb{R}}
\def\eps{\varepsilon}
\def\OP{obstacle problem }
\def\FB{free boundary }
\def\FN{fully nonlinear }
\def\AP{Alt-Phillips }
\def\Sph{\mathbb{S}^{d-1}}
\def\PosS{\{u>0\}}
\def\ZeroS{\{u=0\}}
\def\FDu{F(D^2u)}
\def\uga{u^{\gamma-1}}
\newtheorem{prop}{Proposition}[section]
\newtheorem{thm}{Theorem}[section]
\newtheorem{cor}{Corollary}[section]
\newtheorem{lem}{Lemma}[section]
\theoremstyle{definition}
\newtheorem{defi}{Definition}[section]
\newtheorem{rem}{Remark}[section]
\numberwithin{equation}{section}
\title[Fully nonlinear Alt-Phillips]{On the fully nonlinear Alt-Phillips equation } 
\author{Yijing Wu}
\address{Department of Mathematics, University of Maryland, College Park, MD, USA}
\email{yijingwu@umd.edu}
\author{Hui Yu}
\address{Department of Mathematics,	Columbia University, New York, USA}
\email{ huiyu@math.columbia.edu}
\thanks{HY is supported by  NSF grant DMS-1954363.}
\begin{document}

\begin{abstract}
For a parameter $\gamma\in(1,2)$, we study the \FN version of the  Alt-Phillips equation, $\FDu=u^{\gamma-1}$, for $u\ge 0.$ We establish the optimal regularity of the solution, as well as the $C^1$ regularity of the regular part of the free boundary. 

\end{abstract}

\maketitle

\section{Introduction}
Given a domain, $\Omega$, in the $d$-dimensional Euclidean space $\R^d$, and a parameter, $\gamma$,  from the interval $(0,2)$,  the \textit{classical \AP equation} is 
\begin{equation}\label{SecondEquation}
\begin{cases}
\Delta u=\uga& \\
u\ge 0&
\end{cases}\text{ in $\Omega$.}
\end{equation} When $\gamma=1$, the first equation is understood as $$\Delta u=\chi_{\{u>0\}},$$where $\chi_E$ denotes the characteristic function of a set $E$.

Equation \eqref{SecondEquation} appears in the study of  gas distribution in a porous catalyst pellet \cite{A}. The solution $u$ models the density of the gas, and the right-hand side $\uga$ denotes the rate of reaction with the catalyst. In this context, the relevant range of $\gamma$ is $(1,2)$.   Mathematically, however, it is interesting to study the regularity of the solution as well as the \textit{\FB}$\partial\PosS$ for the entire range of $\gamma\in(0,2)$.  

For such parameters, this equation was first studied by Phillips \cite{P} and Alt-Phillips \cite{AP} as the Euler-Lagrange equation of the \textit{\AP functional} $$u\mapsto \int_\Omega |\nabla u|^2+ \frac{2}{\gamma}u^\gamma.$$
 Notably, this embeds two of the most well-studied \FB problems, the \textit{Bernoulli problem} and the \textit{obstacle problem}, as special cases. When $\gamma\to0$, this energy degenerates to the Alt-Caffarelli functional \cite{AC}
$$u\mapsto \int_\Omega|\nabla u|^2+\chi_{\PosS},$$which is the underlying energy in the Bernoulli problem.  When $\gamma=1$, the \AP functional becomes the energy in the  obstacle problem $$u\mapsto \int_\Omega|\nabla u|^2+2u.$$ 

Both the Bernoulli problem and the obstacle problem have been extensively studied in the past few decades. For the classic theory on these two problems, see, for instance,  \cite{AC, C1, C2, CS, PSU}. For some exciting recent developments, see \cite{CSV, D, ESV, FiSe}. 

To compare \eqref{SecondEquation} with  the Bernoulli problem and the obstacle problem, the starting point is its scaling property. 

Define a new parameter $$\beta=\frac{2}{2-\gamma}\in(1,+\infty).$$  If a function $u$ solves \eqref{SecondEquation} in $B_1$, then its rescaling $$u_r(x):=\frac{u(rx)}{r^\beta}$$ is a solution in $B_{1/r}$. This suggests that the solution grows like $|x|^\beta$ from the free boundary.  Now noting that solutions to the Bernoulli problem grow linearly, and that solutions to the obstacle problem grow quadratically, it is natural to study the \textit{distorted solutions} of \eqref{SecondEquation},  $u^{1/\beta}$ and $u^{2/\beta}$, so that techniques developed for these two classic problems can be applied.

With this simple yet fundamental observation, a lot of works were devoted to the study of the classical \AP equation \eqref{SecondEquation}.

Concerning the regularity of the solution,  Alt-Phillips showed that the minimizer\footnote{For $\gamma\in(0,1)$, there is no uniqueness of solution to \eqref{SecondEquation}. The regularity theory is developed for minimizers of the Alt-Phillips functional.} is in $C^{1,\beta-1}$ if $\gamma\in (0,1)$ \cite{AP}. For $\gamma\in (1,2)$, the solution is in $C^{2,\alpha}$ for $\alpha=\min\{\gamma-1,\delta\}$, where $\delta>0$ is a dimensional constant \cite{P}.  These are optimal for the corresponding range of $\gamma.$

They also obtained results about the free boundary $\partial\PosS$. When $\gamma\in(0,1)$, the \FB  has finite $(d-1)$-dimensional Hausdorff measure. Recall that $d$ is the dimension of the ambient space. Moreover, the \FB is locally a $C^{1,\alpha}$-hypersurface outside a set of zero $(d-1)$-dimensional Hausdorff measure \cite{AP, W}.  

When $\gamma\in(1,2)$, the \FB decomposes into the \textit{regular part} and the \textit{singular part}. A point $x_0\in\partial\PosS$ is a \textit{regular point} if 
\begin{equation}\label{ThirdEquation}\limsup_{r\to0}\frac{|B_r(x_0)\cap\{u=0\}|}{r^d}>0.\end{equation}Otherwise, it is a \textit{singular point}. Based on results in \cite{AP}, Bonorino showed that the regular part is locally a $C^{1,\alpha}$-hypersurface, and the singular part is rectifiable \cite{B}.

Some of these results have been extended to a general class of equations by De Silva-Savin \cite{DS}. The two-phase problem was studied by Fotouhi-Shahgholian \cite{FoSh}.

Recently, there has been quite some interest in  \FB problems involving \textit{\FN operators}. Apart from various industrial applications, these problems often lead to unexpected mathematical discoveries of deep regularizing mechanisms.   See, for instance, Caffarelli-Duque-Vivas \cite{CDV}, Figalli-Shahgholian \cite{FiSh}, Indrei-Minne \cite{IM} and Ros-Oton-Serra \cite{RS}.

Despite these exciting developments, very little is known about the\textit{ fully nonlinear Alt-Phillips equation} 
\begin{equation}\label{FirstEquation}
\begin{cases}
\FDu=\uga &\\
u\ge0 &
\end{cases} \text{ in $\Omega$.}
\end{equation}  Here $\gamma\in(0,2)$, and $F$ is a convex  elliptic operator. Solutions are understood in the viscosity sense. Details about such operators and viscosity solutions are given in the next section. On a slightly different note, the singular version of \eqref{FirstEquation} when $\gamma\in(0,1)$ has been studied by Ara\'ujo-Teixeira in \cite{AT}. The case for $\gamma=0$ has been studied in the context of singular perturbations by Ricarte-Teixeira in \cite{RT}.

For the range $\gamma\in(0,2)$, the only case that has been addressed is when $\gamma=1$, that is, the \textit{fully nonlinear obstacle problem} 
\begin{equation}\label{FNObstacle}
\begin{cases}
\FDu=\chi_{\PosS} &\\
u\ge0 &
\end{cases} \text{ in $\Omega$.}
\end{equation}For this special case, the regularity of the solution and the regular part of the free boundary were studied by Lee \cite{L}. Together with Savin, the second author obtained regularity of  the singular part of the free boundary  \cite{SY}. Intersection of the free boundary with the fixed boundary was studied  by Indrei \cite{I}.

For general $\gamma\in(0,1)\cup(1,2),$ regularity of the solution as well as the \FB remains completely open. 

Unlike its classical counterpart \eqref{SecondEquation}, we do not have fundamental tools like monotonicity formulae developed by Spruck \cite{S} and Weiss \cite{W2}. In particular, this means that blow-ups are in general not homogeneous, and their \textit{contact sets} $\{u=0\}$ are not cones. Moreover, due to the nonlinearity of the operator, the distorted solutions $u^{1/\beta}$ and $u^{2/\beta}$ no longer solve clean equations. 

Compared with the fully nonlinear obstacle problem \eqref{FNObstacle}, the difficulty lies in the nontrivial right-hand side of \eqref{FirstEquation}. In \eqref{FNObstacle}, the right-hand side is constant in the \textit{non-contact set} $\PosS$. In particular,  we can  get equations for  the first and second derivatives of the solution by differentiating \eqref{FNObstacle}. This is no longer the case for \eqref{FirstEquation}. Here the right-hand side $\uga$ blows up near the \FB when $\gamma\in(0,1)$, and degenerates when $\gamma\in(1,2)$. In either case, it is not differentiable near $\partial\PosS.$

Dealing with these challenges requires new ideas. 

In this work, we develop techniques to study  the fully nonlinear \AP equation \eqref{FirstEquation} for $\gamma\in(1,2).$ We establish the optimal regularity of the solution $u$, and the $C^1$-regularity of the regular part of the free boundary. 

Before introducing our main results, we remark that the study of the singular part of the \FB is far from complete, even for the classical problem \eqref{SecondEquation}. Also, we only study \eqref{FirstEquation} for $\gamma\in(1,2)$. As mentioned at the beginning of this introduction, this is the range relevant to the physical model. The study of \eqref{FirstEquation} for $\gamma\in(0,1)$ requires a different set of ideas, and is postponed to a future work. 

Now we state our main results.

The starting point is a Harnack-type estimate for \eqref{FirstEquation}, which says that the solution at a point controls its values in an entire neighborhood. 

\begin{thm}\label{HarnackThm}
Suppose that $F$ is a  convex uniformly elliptic operator with $F(0)=0.$  For $\gamma\in(1,2)$, let $u$ be a solution to \eqref{FirstEquation} in $B_R$ for some $R>0$. 

Then $$\sup_{B_{R/2}}u\le C(\inf_{B_{R/2}}u+R^\beta)$$ for $\beta=\frac{2}{2-\gamma}$ and a universal constant $C$. 
\end{thm}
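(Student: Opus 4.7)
The plan is to combine the natural scaling of the equation with the Caffarelli--Cabr\'e Harnack inequality, exploiting that $\gamma-1<1$ makes the right-hand side sublinear in $u$. The identity $\beta(\gamma-1)=\beta-2$ shows that the rescaling $u(x)\mapsto u(Rx)/R^\beta$ preserves the equation, so I may reduce to $R=1$ and aim for $\sup_{B_{1/2}}u\le C(\inf_{B_{1/2}}u+1)$. Convexity of $F$ with $F(0)=0$ and uniform ellipticity give the Pucci sandwich $\mathcal{M}^-(D^2 u)\le F(D^2 u)\le\mathcal{M}^+(D^2 u)$, which I will use throughout.

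First, I renormalize by the size of $u$. Setting $M:=\max(\sup_{B_{7/8}}u,\,1)$ and $v:=u/M\in[0,1]$ on $B_{7/8}$, the $1$-homogeneity of $\mathcal{M}^\pm$ gives $\mathcal{M}^-(D^2 v)\le M^{\gamma-2}v^{\gamma-1}$ and $\mathcal{M}^+(D^2 v)\ge M^{\gamma-2}v^{\gamma-1}\ge 0$, with right-hand side bounded by $M^{\gamma-2}\le 1$ (using $\gamma-2<0$). The Caffarelli--Cabr\'e Harnack inequality for $v$, multiplied back by $M$, yields the preliminary estimate
\[
\sup_{B_{1/2}}u\le C_0\bigl(\inf_{B_{1/2}}u+M^{\gamma-1}\bigr).
\]

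The main obstacle is replacing $M^{\gamma-1}$ by a universal constant, since $M$ is not a priori controlled. I would handle this by a Harnack chain: cover a polygonal path from the argmax of $u$ on $\overline{B_{7/8}}$ to a point in $B_{1/2}$ by a \emph{fixed finite} number of overlapping balls of universal radius (chosen small enough that each stays inside $B_1$), apply the rescaled Harnack on each (costing a factor of $M^{\gamma-1}$ per step), and compose. This yields a reverse bound $M\le C\sup_{B_{1/2}}u+CM^{\gamma-1}$; since $\gamma-1<1$, Young's inequality $M^{\gamma-1}\le\varepsilon M+C_\varepsilon$ absorbs the $M$ term to give $M\le C\sup_{B_{1/2}}u+C$. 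Substituting this into the preliminary estimate and using subadditivity $(a+b)^{\gamma-1}\le a^{\gamma-1}+b^{\gamma-1}$ together with one more application of Young's closes the loop. The delicate point is that the chain produces constants that accumulate exponentially in the number of steps, so the covering radius must be fixed at a universal scale rather than sent to zero.
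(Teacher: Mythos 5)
Your reduction to $R=1$, the normalization $v=u/M$, and the appeal to the Caffarelli--Cabr\'e Harnack inequality via the Pucci sandwich are all fine and do yield the preliminary bound $\sup_{B_{1/2}} u\le C_0\bigl(\inf_{B_{1/2}}u + M^{\gamma-1}\bigr)$. The gap is in the chain step, and I think it is genuine. For each ball $B_\rho(y_i)$ in the chain you need a pointwise bound $u^{\gamma-1}\le K^{\gamma-1}$ \emph{on that ball}; since you only know $u\le M$ on $B_{7/8}$, you must keep every $B_\rho(y_i)$ inside $B_{7/8}$. But the chain is supposed to start at the $\arg\max$ of $u$ on $\overline{B_{7/8}}$, which can sit on $\partial B_{7/8}$, so the first chain ball necessarily leaves $B_{7/8}$. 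On the outer piece $B_1\setminus B_{7/8}$ the only bound available is $(\sup_{B_1}u)^{\gamma-1}$, not $M^{\gamma-1}$. Hence what the chain actually proves is $M\le C\,\sup_{B_{1/2}}u + C\,(\sup_{B_{1-\eta}}u)^{\gamma-1}$ for some $\eta>0$ used to keep the balls inside $B_1$, and $\sup_{B_{1-\eta}}u\ge M$ is a strictly larger, uncontrolled quantity. Trying to iterate (replace $7/8$ by $15/16$, by $31/32$, etc.) forces the chain radius to shrink like $2^{-k}$, so the chain length grows like $2^k$ and the Harnack constant grows like $C^{2^k}$; the resulting two-sided recursion does not close with a \emph{universal} bound. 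Young's inequality absorbs $M^{\gamma-1}$ once you have a genuine inequality $M\le A+CM^{\gamma-1}$ with universal $C$, but the chain does not supply that inequality. This is exactly why the paper contrasts its universal bound with Teixeira's, whose constant depends on $\|u\|_{L^\infty}$: the normalize-and-apply-Harnack step alone gives only the $L^\infty$-dependent version.

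The paper's proof avoids the loop by a different device. It compares $u$ with the solution $v$ of the \emph{classical} Alt--Phillips problem $\Delta v=v^{\gamma-1}$ in $B_1$, $v=u$ on $\partial B_1$; the inequality $\Delta u\le u^{\gamma-1}$ from Proposition \ref{CompareWithLinear} and a minimum-point argument give $u\ge v$. For $v$ it runs a Green's-function identity
\[
\avint_{\partial B_1}v-v(0)=-C\int_{B_1}G\,v^{\gamma-1},
\]
and bounds the right side by $C\bigl(\avint_{\partial B_1}v\bigr)^{\gamma-1}$ via H\"older, using the crucial sublinearity $\gamma-1<1$ only at this one spot. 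This yields the dichotomy: for every $r$, either $u(0)\ge\frac12\avint_{\partial B_r}u$ or $\avint_{\partial B_r}u\le Ar^\beta$. Integrating in $r$ gives $\int_{B_1}u\le C(u(0)+1)$, and the local maximum principle (Theorem \ref{LocMaxPple}, using $P^+_\Lambda(D^2u)\ge0$) upgrades this $L^1$ bound to the desired $L^\infty$ bound with a universal constant. No a priori $L^\infty$ control of $u$ is ever invoked, so there is nothing to absorb. If you want to push your approach through, the missing ingredient is some mechanism replacing the Harnack chain that does not feed $\sup_{B_1}u$ back into the right-hand side; the Green's-function/comparison computation is exactly such a mechanism.
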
 

A constant is called universal if it only depends on the dimension $d$, the ellipticity constant of the operator, and the parameter $\gamma$. 

Such an estimate for the linear problem \eqref{SecondEquation}  was established by \AP\cite{AP}.

The reader can find several consequences of this estimate in Section 3. Now we note that it leads  to the following universal regularity estimate on the solution.

\begin{thm}\label{FirstResult}
Under the same assumptions as in Theorem \ref{HarnackThm},  the solution to \eqref{FirstEquation} is in $C^{2,\alpha}_{loc}(\Omega)$ for a universal $\alpha\in(0,1).$ 

Moreover, if $B_1\subset\Omega$, then $$\|u\|_{C^{2,\alpha}(B_{1/2})}\le C(u(0)+1)$$ for a universal  constant $C$. 
\end{thm}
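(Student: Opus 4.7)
The plan is to bootstrap from the $L^\infty$ bound provided by Theorem \ref{HarnackThm}, exploiting the fact that $t \mapsto t^{\gamma-1}$ is $(\gamma-1)$-H\"older continuous on $[0,\infty)$ when $\gamma \in (1,2)$.

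By Theorem \ref{HarnackThm}, $\sup_{B_{1/2}} u \le M := C(u(0)+1)$ for a universal $C$, and we may assume $M \ge 1$. Hence $f := u^{\gamma-1}$ lies in $L^\infty(B_{1/2})$ with $\|f\|_\infty \le M^{\gamma-1} \le M$. Viewing $u$ as a viscosity solution of $F(D^2 u) = f$ with bounded right-hand side, I would apply the Krylov--Safonov interior H\"older estimate to obtain $u \in C^\alpha_{loc}$ for some universal $\alpha \in (0,1)$, with $\|u\|_{C^\alpha(B_{3/8})} \le CM$.

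The elementary inequality $|a^{\gamma-1} - b^{\gamma-1}| \le |a-b|^{\gamma-1}$ for $a, b \ge 0$ then implies $f \in C^{\alpha(\gamma-1)}_{loc}$ with $\|f\|_{C^{\alpha(\gamma-1)}(B_{3/8})} \le CM^{\gamma-1} \le CM$. Since $F$ is convex and uniformly elliptic, Caffarelli's interior $C^{2,\alpha}$ estimate for such operators with H\"older right-hand side yields $u \in C^{2,\alpha'}_{loc}$ with $\alpha' = \min(\alpha(\gamma-1), \bar\alpha)$, where $\bar\alpha$ is the universal Evans--Krylov exponent, together with
$$\|u\|_{C^{2,\alpha'}(B_{1/4})} \le C\bigl(\|u\|_{L^\infty(B_{3/8})} + \|f\|_{C^{\alpha(\gamma-1)}(B_{3/8})}\bigr) \le C(u(0)+1).$$
A standard covering argument then extends the bound to $B_{1/2}$, giving the stated estimate.

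The core difficulty is that $u^{\gamma-1}$ degenerates at the free boundary $\partial\PosS$, so the classical Schauder / $C^{2,\alpha}$ theory for smooth right-hand sides does not apply directly. The crucial observation is that, although $t \mapsto t^{\gamma-1}$ is not differentiable at $0$, it is still H\"older continuous with exponent $\gamma-1 > 0$; combined with the a priori $C^\alpha$-regularity of $u$, this produces a H\"older right-hand side, just enough to close the argument via Caffarelli's $C^{2,\alpha}$ estimate for convex operators. A subsequent bootstrap (using $u \in C^1$ to upgrade $f$ directly to $C^{\gamma-1}$) would refine the exponent, but is unnecessary for the stated qualitative conclusion.
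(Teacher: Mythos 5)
Your proof is correct and follows essentially the same bootstrap strategy as the paper: both obtain a bounded right-hand side from the Harnack estimate, upgrade $u^{\gamma-1}$ to a H\"older-continuous source via the elementary $(\gamma-1)$-H\"older continuity of $t\mapsto t^{\gamma-1}$, and then invoke Caffarelli's $C^{2,\alpha}$ estimate for convex operators (the second part of Theorem \ref{EvansKrylov}). The only minor variation is that the paper uses the interior $C^{1,\alpha}$ estimate (first part of Theorem \ref{EvansKrylov}) rather than Krylov--Safonov as the intermediate step, which directly yields the sharper H\"older exponent $\gamma-1$ for the source --- the refinement you correctly note in your closing remark would be unnecessary in that route.
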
 Qualitatively, this result is a direct consequence of the Evans-Krylov theorem. To get the universal bound, however, we need Theorem \ref{HarnackThm}.  

This $C^{2,\alpha}$-regularity of the solution is optimal just like in the classical case.

Theorem \ref{HarnackThm}  also gives universal control over several scaling invariant quantities, i.e.,  $|D^2u/\uga|$.   This allows us to perform a blow-up analysis near the free boundary. 

Classification of blow-up profiles turns out to be challenging. Even when the operator is the Laplacian, this classification is far from complete \cite{BBLT, FoSh}. However, if the free boundary point is a regular point, that is, a point satisfying \eqref{ThirdEquation}, we can get enough geometric information to obtain the following result on \FB regularity. 

\begin{thm}\label{SecondResult}
Together with the assumptions as in Theorem \ref{HarnackThm}, we further assume that $F$ is either differentiable at $0$ or that $F$ is homogeneous.  

Suppose that $u$ solves \eqref{FirstEquation}, then the regular part of the \FB is relatively open in $\partial\PosS$, and is locally a $C^1$-hypersurface. 
\end{thm}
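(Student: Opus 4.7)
The plan is to follow the classical three-step paradigm of Caffarelli — blow-up analysis, classification of blow-up profiles, and improvement of flatness — working in the viscosity framework of De Silva-Savin. The main new difficulty relative to the classical \AP setting is the absence of Weiss-type or ACF-type monotonicity formulae, so none of the intermediate tools (homogeneity of blow-ups, convexity of the contact set) comes for free.

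First I would establish nondegeneracy: if $x_0\in\overline{\PosS}$, then $\sup_{B_r(x_0)}u\ge c r^\beta$ for a universal $c>0$ and $r$ small. This should follow from comparing $u$ against an explicit $\beta$-homogeneous barrier on $\PosS\cap B_r$, using the equation $\FDu=\uga$ together with Theorem \ref{HarnackThm} to turn a pointwise lower bound into a sup-bound. Combined with the upper bound from Theorem \ref{FirstResult}, this pins the growth $u\asymp r^\beta$ at the free boundary, so that the rescalings $u_r(x):=u(x_0+rx)/r^\beta$ are uniformly $C^{2,\alpha}$-bounded on compact sets and admit subsequential limits $u_0$ in $C^2_{loc}(\R^d)$. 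A brief scaling computation, using the identity $\beta(\gamma-1)=\beta-2>0$, shows that the equation satisfied by $u_0$ is
\begin{equation*}
\mathcal{L}(D^2 u_0)=u_0^{\gamma-1},\qquad u_0\ge 0,
\end{equation*}
with $\mathcal{L}=F$ in the $1$-homogeneous case and $\mathcal{L}=DF(0)$ in the differentiable case, the latter being a linear uniformly elliptic operator (since $F$ is convex and uniformly elliptic). The density condition \eqref{ThirdEquation} passes to the limit, giving $\{u_0=0\}$ positive Lebesgue density at the origin, while the growth estimate propagates to $u_0(x)\le C(1+|x|^\beta)$.

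The heart of the argument is then to classify such global profiles as half-plane solutions
\begin{equation*}
u_0(x)=c_\nu (x\cdot\nu)_+^\beta,
\end{equation*}
with $\nu\in\Sph$ and $c_\nu>0$ determined by $\nu$ and $\mathcal{L}$. In the differentiable case, an affine change of variables reduces $\mathcal{L}(D^2u_0)=u_0^{\gamma-1}$ to the classical \AP equation, and the desired classification at regular points follows from Alt-Phillips \cite{AP} and Bonorino \cite{B}. In the homogeneous case, I would exploit the positive density of $\ZeroS$ at the origin to force $\{u_0=0\}$ to contain a half-space through the origin, and then use the equation on the complementary half-space to recover the one-dimensional profile by solving an ODE in the normal direction. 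The step of upgrading positive density to a half-space contact set is where the lack of monotonicity is most painful; a possible route is to first establish convexity of $u_0$ via a concavity estimate for a suitable distorted function — exploiting convexity of $F$ through a second-derivative computation in the spirit of Korevaar-Kennington — and then use convexity together with the density bound and the strong minimum principle on $\{u_0=0\}$.

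Once blow-ups are classified, the rest is standard: a compactness-contradiction argument shows that near a regular point $u$ is $\varepsilon$-close in $L^\infty$ to some $c_\nu(x\cdot\nu)_+^\beta$, after which a Caffarelli-De Silva-Savin improvement-of-flatness iteration yields a H\"older-continuous unit normal field on the regular set and hence its $C^1$ regularity. The same iteration propagates $\varepsilon$-flatness to nearby free boundary points, giving relative openness of the regular set in $\partial\PosS$. I expect the main obstacle to be the classification step in the purely homogeneous case, since neither monotonicity formulae nor clean equations for the distorted solutions $u^{1/\beta}$ and $u^{2/\beta}$ are available to constrain the geometry of $\ZeroS$; everything there must be extracted by direct viscosity and convexity arguments.
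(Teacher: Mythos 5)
Your overall plan (blow-up, classification of global profiles, propagation back to $u$) matches the broad arc of the paper, but at the two decisive technical steps you and the paper part ways, and your version of each step has a gap.

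\textbf{Convexity and classification of global profiles.} The paper does not establish convexity of the blow-up $u_0$ via a Korevaar--Kennington type concavity estimate. Instead it proves (Lemma \ref{GlobConv}) that $D_{11}u_0 \ge 0$ by studying the scaling-invariant quantity $D_{11}u_0/u_0^{\gamma-1}$: using the universal bound $|D^2u_0/u_0^{\gamma-1}|\le C$ from Corollary \ref{HessianBound} and the invariance of the equation under the rescalings \eqref{Rescaling}, one takes a minimizing sequence of that ratio, normalizes at each point, and passes to a compact limit $w$ in which the ratio attains an \emph{interior} minimum; a computation using Proposition \ref{DerEquation}, homogeneity of $F$, and the concavity of $t\mapsto t^{\gamma-1}$ shows the function $g=D_{11}w+\ell w^{\gamma-1}$ is a supersolution of the linearized operator $L_w$, contradicting the strong maximum principle. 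A Korevaar--Kennington argument, by contrast, is a boundary-value-driven technique; on all of $\R^d$ with no decay or structure at infinity it is far from clear it applies, and you acknowledge this yourself. Your suggestion to reduce the differentiable case to the classical Alt--Phillips equation by an affine change of variables does work (and is in fact unnecessary, since the normalization \eqref{Convexity} already makes $DF(0)=\mathrm{trace}$), but it only classifies the blow-up \emph{limit} as a half-plane solution; it does not by itself give you any regularity of $\partial\PosS$ for the original $u$.

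\textbf{Passage from blow-up to free boundary regularity.} Here is the real gap. You call the Caffarelli--De Silva--Savin improvement-of-flatness iteration ``standard,'' but for the fully nonlinear equation with degenerate right-hand side $u^{\gamma-1}$ it is not available in the literature, and carrying it out would require a one-sided partial Harnack inequality in the flat regime, a linearization to a transmission-type problem, and regularity for that linearized problem -- a substantial program that neither this paper nor its references provides. The paper deliberately avoids it. Instead, the decisive tool is the improvement-of-monotonicity Lemma \ref{ImproveMonotonicity}, applied directly to $u$ (not to the limit): once Proposition \ref{BlowUp} produces a cone of directions $\{e_1\ge\delta\}$ in which $D_eu_h\ge c\delta$ on a super-level set and $D_eu_h\ge -\eps$ in $B_r$, Lemma \ref{ImproveMonotonicity} upgrades this to $D_eu\ge 0$ in $B_{\rho_\delta}$. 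That cone-monotonicity gives a Lipschitz graph (hence every nearby free boundary point satisfies \eqref{PositiveDensity}, giving relative openness) and, by sending $\delta\to 0$, gives a normal vector that is continuous -- hence $C^1$, which is all the theorem claims. The improvement-of-flatness machinery you propose would give $C^{1,\alpha}$, which is both more than you need and more than you can currently justify. As written, your proposal has a genuine gap at exactly this step: the step you label as standard is where all the difficulty is concentrated.
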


This article is structured as follows: In the next section, we gather some necessary preliminaries about fully nonlinear operators and our problem \eqref{FirstEquation}.  In Section 3, we prove the Harnack-type estimate, Theorem \ref{HarnackThm}, and some immediate consequences, including Theorem \ref{FirstResult}. In Section 4, we study properties of blow-up profiles at a regular free boundary point. Finally, these properties are used to prove Theorem \ref{SecondResult} in Section 5.

\section{Preliminaries and notations}
This section is divided into two subsections. In the first, we collect some results on fully nonlinear elliptic operators. The main reference is Caffarelli-Cabr\'e \cite{CC}. In the second subsection, we state some definitions related to the \FN \AP problem, and give a few intermediate properties of the solution. 
\subsection{Fully nonlinear elliptic operators}
Let $\mathcal{S}_d$ denote the space of $d$-by-$d$ symmetric matrices. Let $\Lambda$ be a constant in $[1,+\infty)$. 

A function 
$$F:\mathcal{S}_d\to\R$$ 
is a \textit{uniformly elliptic operator with ellipticity constant $\Lambda$} if it satisfies 
\begin{equation}\label{Ellipticity}\frac{1}{\Lambda}\|P\|\le F(M+P)-F(M)\le\Lambda\|P\|\end{equation} for all $M,P\in\mathcal{S}_d$ and $P\ge 0.$

Solutions to these operators are understood in the \textit{viscosity} sense. For the definition of a viscosity solution, see Definition 2.3 in  \cite{CC}. Soon enough we see that the solution to our problem is actually a classical solution. 

Viscosity solutions have the following stability property. See Proposition 4.11 in \cite{CC}.
\begin{prop}\label{Stability}
Let $F_h$ be a sequence of \FN elliptic operators with the same ellipticity constant $\Lambda$. 

Let $u_h\in C(\Omega)$ be viscosity solutions to $$F_h(D^2u_h)=f_h \text{ in $\Omega$}$$ for a sequence of continuous functions $f_h.$

Suppose that $F_h\to F$ locally uniformly in $\mathcal{S}_d$, and that $u_h\to u$ and $f_h\to f$ locally uniformly in $\Omega$, then $$\FDu=f \text{ in $\Omega$}.$$\end{prop}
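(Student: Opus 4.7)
The plan is to check, directly from the definition, that $u$ is both a viscosity subsolution and supersolution of $\FDu=f$ in $\Omega$. The two cases are symmetric, so I would carry out the subsolution argument in detail and indicate how to mirror it for supersolutions.

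For the subsolution part, I would fix a $C^2$ test function $\varphi$ touching $u$ from above at an interior point $x_0\in\Omega$ and aim to prove $F(D^2\varphi(x_0))\ge f(x_0)$. The first step is to replace $\varphi$ with $\varphi+\eps|x-x_0|^2$ so that the touching becomes strict; this costs an $O(\eps)$ error in the final inequality, to be removed by sending $\eps\to 0$ at the end. On a small closed ball $\overline{B_r(x_0)}\subset\Omega$, strict touching plus the locally uniform convergence $u_h\to u$ would let me extract interior minimizers $x_h$ of $\varphi-u_h$ with $x_h\to x_0$. Up to an additive constant $c_h\to 0$, the function $\varphi+c_h$ then touches $u_h$ from above at $x_h$, and the viscosity subsolution inequality for $u_h$ yields $F_h(D^2\varphi(x_h))\ge f_h(x_h)$.

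The final step is passage to the limit. Since $D^2\varphi$ is continuous, $D^2\varphi(x_h)\to D^2\varphi(x_0)$; combined with $F_h\to F$ locally uniformly on $\mathcal{S}_d$ and the equicontinuity of the $F_h$ that follows from the uniform ellipticity constant $\Lambda$, a short triangle-inequality split gives $F_h(D^2\varphi(x_h))\to F(D^2\varphi(x_0))$. Locally uniform convergence of $f_h$ combined with continuity of $f$ similarly gives $f_h(x_h)\to f(x_0)$. Thus $F(D^2\varphi(x_0))+O(\eps)\ge f(x_0)$, and sending $\eps\to 0$ completes the subsolution property. The supersolution case is identical after replacing ``touch from above'' by ``touch from below'' and reversing the inequalities.

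I do not expect a serious conceptual obstacle: this is the standard ``perturb the test function, extract contact points, pass to the limit'' viscosity argument. The one place where care is genuinely required is the use of uniform ellipticity of the $F_h$, which enters precisely to give the $F_h$ uniform Lipschitz control on $\mathcal{S}_d$; this is what justifies interchanging the limit with $F_h(D^2\varphi(x_h))$ and thereby identifies the limiting equation as $\FDu=f$ rather than something weaker.
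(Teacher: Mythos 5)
Your argument is correct and is the standard viscosity-stability proof: perturb the test function to make the touching strict, extract interior contact points $x_h\to x_0$ from locally uniform convergence, invoke the subsolution inequality for $u_h$ at $x_h$, and pass to the limit using the uniform Lipschitz bound on $\mathcal{S}_d$ for the $F_h$ coming from the shared ellipticity constant $\Lambda$. The paper itself does not prove this statement --- it cites Proposition 4.11 of Caffarelli--Cabr\'e \cite{CC} --- and your proof is essentially the one found there, so there is no divergence of method to report.
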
 

On top of its ellipticity \eqref{Ellipticity}, we assume that $F$ satisfies 
\begin{equation}\label{Convexity}
\begin{cases}
&F \text{ is convex, }\\ &F(0)=0, \\&\text{the trace operator is a sub-differential of } F \text{ at } 0.
\end{cases}
\end{equation} 

\begin{rem}Convexity and $F(0)=0$ are natural assumptions in the study of free boundary problems with nonlinear operators \cite{L,SY}. In particular, they are needed for the regularity of the solution. See Theorem \ref{EvansKrylov}.

Convexity implies the existence of \textit{sub-differentials}. At a matrix $A\in\mathcal{S}_d$, a sub-differential of $F$ is a linear operator $S_A:\mathcal{S}_d\to\R$ satisfying 
\begin{equation}\label{SubDiff}
S_A(M)\le F(A+M)-F(A) \text{ for all $M\in\mathcal{S}_d$.}
\end{equation} Ellipticity \eqref{Ellipticity} implies that $S_A$ is a uniformly elliptic operator. 

 Up to a normalization, we can assume that the trace operator is a sub-differential of $F$ at $0$.\footnote{This normalization may change the ellipticity constant as in \eqref{Ellipticity}. However, the operator remains uniformly elliptic with a new ellipticity constant, depending only on the original $\Lambda.$} \end{rem}

Ellipticity \eqref{Ellipticity} and  convexity \eqref{Convexity} are sufficient for the optimal regularity of the solution (See Section 3). To study blow-up profiles, however, we need to connect solutions at different scales. This requires more assumptions on the operator $F$. 

In the literature, there are two types of natural assumptions. 

Some authors assume the operator to be differentiable \cite{SY}. Under this assumption,  we can linearize the operator. Some authors assume that the operator is homogeneous \cite{DFS, DS2}. This way,  rescalings of the solution all solve the same equation. 

In this work, we are able to deal with both  cases. That is,  we  assume that: 
{\bf Either }
\begin{equation}\label{Dable}
 F \text{ is differentiable at } 0,\footnote{Combined with \eqref{Convexity}, this implies 
 $$DF(0)(M)=trace(M).$$ Here  $DF(0)$ denotes the G\^{a}teaux derivative of $F$ at $0$ defined as 
$$DF(0)(M)=\lim_{\epsilon\to 0}{\frac{F(\epsilon M)-F(0)}{\epsilon}}.$$} 
\end{equation} {\bf or} 
\begin{equation}\label{HomogeneousF}
F \text{ is homogeneous, that is, } F(\lambda M)=\lambda F(M) \text{ for all $\lambda>0$ and $M\in\mathcal{S}_d$.} 
\end{equation}

For convex operators, solutions enjoy nice regularity. We only need the following simple version, which is a combination of interior estimates  by Caffarelli \cite{C3} and a theorem by Evans \cite{E} and Krylov \cite{K}.

\begin{thm}\label{EvansKrylov}
Let $F$ be a convex uniformly elliptic operator with ellipticity constant $\Lambda$, satisfying $F(0)=0$. Suppose that $u$ solves $$\FDu=f \textit{ in $B_1\subset\R^d$,}$$ then we have the following:

1) If $f$ is  bounded, then $u\in C^{1,\alpha}(B_{1/2})$ for all $\alpha\in(0,1)$ with 
$$\|u\|_{C^{1,\alpha}(B_{1/2})}\le C(\|u\|_{\mathcal{L}^\infty(B_1)}+\|f\|_{\mathcal{L}^\infty(B_1)})$$ for some constant $C$ depending only on $d$, $\Lambda$ and $\alpha.$

2) There is a constant $\alpha_0\in(0,1)$, depending only on $d$ and $\Lambda$, such that if $f\in C^{\alpha}(B_1)$ for some $\alpha\in(0,1)$, then $u\in C^{2,\min\{\alpha_0,\alpha\}}(B_{1/2})$ with 
$$\|u\|_{C^{2,\min\{\alpha_0,\alpha\}}(B_{1/2})}\le C(\|u\|_{\mathcal{L}^\infty(B_1)}+\|f\|_{C^\alpha(B_1)})$$ for some constant $C$ depending only on $d$, $\Lambda$ and $\alpha.$
\end{thm}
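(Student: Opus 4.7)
The plan is to combine two cornerstones of the fully nonlinear theory: Caffarelli's interior $W^{2,p}$ estimate for convex uniformly elliptic equations with bounded right-hand side, and the Evans--Krylov theorem, together with a standard Campanato-type iteration to absorb the $C^\alpha$ dependence on the right-hand side.

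For part 1), I would first invoke Caffarelli's interior $W^{2,p}$ estimate for viscosity solutions of $F(D^2 u) = f$. Under convexity and uniform ellipticity of $F$, together with $F(0)=0$, for every $p\in(1,\infty)$ one has
\[
\|u\|_{W^{2,p}(B_{3/4})} \le C\bigl(\|u\|_{L^\infty(B_1)} + \|f\|_{L^\infty(B_1)}\bigr),
\]
with $C=C(d,\Lambda,p)$. The Sobolev--Morrey embedding $W^{2,p}(B_{3/4}) \hookrightarrow C^{1,\alpha}(B_{1/2})$ for $p>d/(1-\alpha)$ then yields the stated bound for any $\alpha\in(0,1)$.

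For part 2), the key ingredient is the Evans--Krylov theorem: for a convex uniformly elliptic $F$ and \emph{constant} right-hand side, viscosity solutions are $C^{2,\alpha_0}$ in the interior with universal estimates, for some universal $\alpha_0=\alpha_0(d,\Lambda)\in(0,1)$. To upgrade this to $f\in C^\alpha$, I would run a freezing-of-coefficients/Campanato iteration. Fix $x_0\in B_{1/2}$ and a scale $r>0$, and let $v$ solve $F(D^2 v)=f(x_0)$ in $B_r(x_0)$ with $v=u$ on $\partial B_r(x_0)$. Evans--Krylov applied to $v$ produces a quadratic polynomial $P_{x_0,r}$ with
\[
\|v - P_{x_0,r}\|_{L^\infty(B_\rho(x_0))} \le C\,(\rho/r)^{2+\alpha_0}\,r^{2}\sup_{B_r(x_0)}|D^2 v|.
\]
Meanwhile, convexity of $F$ places $u-v$ between the Pucci extremal operators with right-hand side $f(x)-f(x_0)$, so the Alexandroff--Bakelman--Pucci estimate gives $\|u-v\|_{L^\infty(B_r(x_0))}\le C\,r^{2+\alpha}[f]_{C^\alpha}$. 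Iterating on dyadic radii and optimizing in $\rho/r$, one extracts at each $x_0$ a second-order jet approximating $u$ at order $2+\min\{\alpha_0,\alpha\}$, with universal constants, and Campanato's characterization converts this into the claimed $C^{2,\min\{\alpha_0,\alpha\}}(B_{1/2})$ bound with right-hand side $\|u\|_{L^\infty(B_1)}+\|f\|_{C^\alpha(B_1)}$.

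The main obstacle is the bookkeeping in the Campanato iteration of part 2): one must verify that the quadratic jets extracted at successive dyadic scales form a Cauchy sequence whose defects decay geometrically, and that the constants from Evans--Krylov and from ABP compound only into the single final universal constant. This step is technical but routine, and it is the reason the universal exponent $\alpha_0$ appearing in the statement is precisely the one coming from Evans--Krylov.
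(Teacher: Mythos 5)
The paper does not prove this theorem; it is stated as a known combination of Caffarelli's interior estimates \cite{C3} and the Evans--Krylov theorem \cite{E,K}, and the argument you give is essentially a correct reconstruction of how those references fit together. A few remarks on the details. For part 1) your $W^{2,p}$-plus-Morrey route works (once one has interior $C^{1,\bar\alpha}$ estimates for the homogeneous equation, which hold for any uniformly elliptic $F$), but the cleaner and more standard route is to invoke directly Caffarelli's interior $C^{1,\alpha}$ estimate for $F(D^2u)=f$ with bounded $f$ (Chapter 8 of Caffarelli--Cabr\'e): this is proved by the same freezing/approximation scheme as part 2) but with the $C^{1,\bar\alpha}$ theory for the frozen equation, needs neither convexity nor $W^{2,p}$, and delivers exactly the stated bound with $\|f\|_{L^\infty}$ on the right. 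For part 2) your Campanato/freezing iteration is the standard Schauder argument; two small imprecisions that do not affect the conclusion: the comparison between $u-v$ and the Pucci extremal operators uses only uniform ellipticity, not convexity (convexity enters only through Evans--Krylov for the frozen problem), and the interior $C^{2,\alpha_0}$ estimate for $v$ should be phrased in terms of $\|v\|_{L^\infty(B_r)}$ (which is controlled via ABP and the boundary data $u$) rather than $\sup_{B_r}|D^2 v|$, which is not a priori finite before the estimate is established.
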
 

We need the \textit{Pucci operators}, $$P^+_\Lambda,P^-_\Lambda:\mathcal{S}_d\to\R,$$defined as 
\begin{equation*}\label{Pucci}
P^+_\Lambda(M)=\sup_{\frac{1}{\Lambda}I\le A\le \Lambda I}trace(AM);\text{ and  }P^-_\Lambda(M)=\inf_{\frac{1}{\Lambda}I\le A\le \Lambda I}trace(AM).
\end{equation*} 

They are extremal among operators with ellipticity constant $\Lambda.$ In particular, if  $F(0)=0$, then 
\begin{equation}\label{CompareWithPucci}
P^+_\Lambda(M)\ge F(M).
\end{equation}

We conclude this subsection with the following  local maximum principle. See Theorem 4.8 in \cite{CC}.
\begin{thm}\label{LocMaxPple}
Suppose that $$P^+_\Lambda(D^2u)\ge0 \text{ in $B_1\subset\R^d$},$$then $$\sup_{B_{1/2}}u\le C\|u\|_{\mathcal{L}^1(B_1)}$$ for a constant $C$ depending only on $d$ and $\Lambda.$
\end{thm}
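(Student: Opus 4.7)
This is the classical Krylov--Safonov local maximum principle for viscosity subsolutions of Pucci's extremal operator; I would follow the scheme from Chapter~4 of Caffarelli--Cabr\'e, built on the ABP estimate. The first step is a reduction: since $P^+_\Lambda$ is sublinear with $P^+_\Lambda(0)=0$, the positive part $u^+$ also satisfies $P^+_\Lambda(D^2u^+)\ge 0$ in the viscosity sense, and $\|u^+\|_{\mathcal{L}^1(B_1)}\le \|u\|_{\mathcal{L}^1(B_1)}$, so it suffices to prove the estimate assuming $u\ge 0$.

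The heart of the proof is a scaled interpolation inequality: for every $1/2\le\rho<\rho'\le 1$,
\begin{equation*}
\sup_{B_\rho}u\;\le\;\tfrac{1}{2}\sup_{B_{\rho'}}u+\frac{C}{(\rho'-\rho)^N}\|u\|_{\mathcal{L}^1(B_{\rho'})},
\end{equation*}
for universal $C,N$; a standard dyadic iteration over radii $\rho_k=1/2+2^{-k-1}$ then yields the theorem. To prove this interpolation I would introduce a radial barrier of the form $\eta(x)=c(|x|^{-\alpha}-1)_+$, appropriately rescaled to the annulus between $B_\rho$ and $B_{\rho'}$, with $\alpha$ large and universal so that $\eta$ is a $P^+_\Lambda$-supersolution on that annulus. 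Setting $M=\sup_{B_{\rho'}}u$, the function $v=u-(M/\eta(\rho))\eta$ is nonpositive on $\partial B_{\rho'}$ and satisfies $P^+_\Lambda(D^2 v)\ge -Mf$ for an explicit $f\in \mathcal{L}^d$ supported in a thin annulus, so ABP produces a bound on $\sup v$.

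To upgrade this into an $\mathcal{L}^1$ bound one invokes the $L^\varepsilon$ decay estimate applied to the nonnegative supersolution $M-u$ of $P^-_\Lambda$ on small balls inside $B_{\rho'}$ where $u$ is close to $M$, then aggregates the $\mathcal{L}^d$ information on small balls into $\mathcal{L}^1$ information on $B_1$ via a Calder\'on--Zygmund style dyadic covering. This last step is the main technical obstacle and forms the core of the Krylov--Safonov machinery; everything else is soft manipulation of Pucci sub/supersolutions and a routine iteration. All constants depend only on $d$ and $\Lambda$, as required.
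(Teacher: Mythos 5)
This statement is not proved in the paper; it is cited as Theorem~4.8 of Caffarelli--Cabr\'e \cite{CC}, which is exactly the route you are attempting, so the comparison is against that classical argument. Your reduction to $u\ge0$ via $u^+$ is correct: $u^+$ is the maximum of two viscosity subsolutions of the degenerate elliptic operator $P^+_\Lambda$ (recall $P^+_\Lambda(0)=0$), hence itself a subsolution, and $\|u^+\|_{\mathcal{L}^1}\le\|u\|_{\mathcal{L}^1}$. The gap is in your second paragraph. ABP can only give an estimate of the form $\sup_\Omega v \le \sup_{\partial\Omega} v^+ + C\,\mathrm{diam}(\Omega)\,\|f\|_{\mathcal{L}^d(\Omega)}$; no choice of barrier turns this into control of $\sup u$ by the $\mathcal{L}^1$ norm of $u$, so the barrier-plus-ABP step cannot prove the interpolation inequality you state. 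Moreover the barrier is oriented the wrong way: $\eta=c(|x|^{-\alpha}-1)_+$, rescaled to the annulus, vanishes on the outer sphere $\partial B_{\rho'}$, so $v=u-(M/\eta(\rho))\eta$ equals $u$ there and is not nonpositive. Tracing through the resulting ABP bound only recovers $\sup_{B_\rho}u\le M(1+C\|f\|_{\mathcal{L}^d})$, which is weaker than the trivial inequality $\sup_{B_\rho}u\le M$ and contains no $\mathcal{L}^1$ information. Barriers of this type belong to the proof of the weak Harnack inequality (the propagation lemma feeding the Calder\'on--Zygmund decomposition), not to a direct derivation of the local maximum principle.

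What you defer to paragraph three is in fact the entire proof: the power decay of the super-level sets of a normalized subsolution, established via the Calder\'on--Zygmund cube decomposition and the weak Harnack inequality, and then upgraded to a supremum bound --- this is the content of Lemmas~4.5--4.6 and the proof of Theorem~4.8(2) in \cite{CC}. You have correctly identified the right reference and the right machinery, but you treat it as a black box and label it the ``main technical obstacle''; and the barrier construction you offer in its place is not a partial realization of that machinery but a misdirection, since ABP by itself never yields $\mathcal{L}^1$-to-$\mathcal{L}^\infty$ control. As it stands this is a plan pointing at the correct literature rather than a proof, and the one concrete step it does supply does not work.
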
 

\subsection{The \FN \AP problem}
Recall that we are considering \eqref{FirstEquation} with $F$ satisfying assumptions \eqref{Ellipticity} and \eqref{Convexity} together with \eqref{Dable} or \eqref{HomogeneousF}, and  $$\gamma\in(1,2).$$

Let $u$ be a continuous viscosity solution, then its local  boundedness  ensures that the equation is satisfied in the classical sense by Theorem \ref{EvansKrylov}.

Unlike some more standard \FB problems, the right-hand side of our equation \eqref{FirstEquation} does not jump between the \textit{contact set} $\{u=0\}$ and the \textit{non-contact set} $\PosS.$ Along the \textit{\FB}$\partial\PosS$, however, we still have a over-determined condition $$u=|\nabla u|=0 \text{ along $\partial\PosS$.}$$ This condition imposes restriction on the geometry of the free boundary. 

For notational simplicity, we introduce the following classes of solutions:
\begin{defi}\label{SolClass}
Let $\Omega$ be a domain in $\R^d$, and $\gamma\in(1,2)$. Assume that $F$ satisfies \eqref{Ellipticity} and \eqref{Convexity}, together with either \eqref{Dable} or \eqref{HomogeneousF}.

We say that  $$u\in\mathcal{S}^{F}(\Omega)$$ if $u$ solves \eqref{FirstEquation}.

Given $R>0$ and $x_0\in\R^d$, we say that $$u\in \mathcal{P}_R^{F}(x_0)$$ if $$u\in \mathcal{S}^{F}(B_R(x_0)),\text{ and } x_0\in\partial\PosS.$$ 

Finally, we say that $$u\in \mathcal{P}^{F}_\infty(x_0)$$ if $$u\in \mathcal{S}^{F}(\R^d),\text{ and } x_0\in\partial\PosS.$$ 

With a slight abuse of notation, we use $\mathcal{S}^\Delta(\Omega)$ and $\mathcal{P}_R^\Delta(x_0)$ to denote the classes when the operator is the Laplacian. 

\end{defi}

We often omit the superscript, $F$, when there is no ambiguity. 

Given $u\in\mathcal{P}_R^F(x_0)$ and $0<r<R$, we define the \textit{rescaled solution}
\begin{equation}\label{Rescaling}
u_{x_0,r}(x)=\frac{1}{r^\beta}u(rx+x_0)
\end{equation} with 
\begin{equation}\label{Beta}
\beta=\frac{2}{2-\gamma}.
\end{equation} Then we have $$u\in\mathcal{P}_{R/r}^{ F_r}(0)$$ with 
\begin{equation}\label{RescaledOperator} F_r(M)=\frac{1}{r^{\beta-2}}F(r^{\beta-2}M). \end{equation}Note that $F_r$ still satisfies assumptions \eqref{Ellipticity}, \eqref{Convexity} and \eqref{Dable} or \eqref{HomogeneousF}.

If the operator $F$ satisfies \eqref{Dable}, then we have
\begin{equation}\label{LinearizingAtZero}
\lim_{r\to 0}F_r(M)=\lim_{r\to 0}\frac{1}{r^{\beta-2}}[F(r^{\beta-2}M)-F(0)]=DF(0)M=trace(M).
\end{equation} 
If the operator satisfies \eqref{HomogeneousF}, then \begin{equation}\label{Invariance}F_r(M)=F(M) \text{ for all $r>0$.}
\end{equation}

The \FB decomposes into the regular part and the singular part according to the density of the contact set.
\begin{defi}\label{RegSing}
Suppose $u\in\mathcal{P}_R(x_0)$.

We say that $x_0$ is a \textit{regular free boundary point} if \begin{equation}\label{PositiveDensity}\limsup_{r\to 0}\frac{|B_r(x_0)\cap\{u=0\}|}{r^d}>0.\end{equation} Otherwise, $x_0$ is a \textit{singular free boundary point.}

The collection of regular free boundary points is denoted by $Reg(u)$. \end{defi}

For $u\in\mathcal{S}(\Omega)$, we define the \textit{linearized operator} as 
\begin{equation}\label{LinearizedOperator}
L_u(w)=S_{D^2u}(D^2 w)-(\gamma-1)u^{\gamma-2}w \text{ in $\PosS\cap\Omega$.}
\end{equation}  
Here $S_{D^2u}$ is a sub-differential of $F$ at $D^2u$ as in \eqref{SubDiff}.

For a direction $e\in\Sph$, where $\Sph$ denotes the unit sphere,  let $D_eu$ and $D_{ee}u$ denote, respectively, the directional derivative and second derivative of $u$ in the $e$-direction. These are both super-solutions to the linearized operator. 

\begin{prop}\label{DerEquation}For $u\in\mathcal{S}(\Omega)$, we have
\begin{equation*}L_u(D_eu)\leq0 \text{ and }L_u(D_{ee}u)\leq0\text{ in $\PosS\cap\Omega$.}\end{equation*} 
\end{prop}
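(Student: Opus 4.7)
The plan is to obtain both inequalities by taking first and second difference quotients of the equation $\FDu=\uga$ in the open set $\PosS\cap\Omega$ and exploiting the subdifferential inequality \eqref{SubDiff} that convexity of $F$ provides. Since $u>0$ on this set, the right-hand side $\uga$ is as smooth as $u$, so by Theorem \ref{EvansKrylov} together with a standard bootstrap I may assume $u\in C^3(\PosS\cap\Omega)$, so that every classical derivative that appears below is well-defined.

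First I would fix a point $x\in\PosS\cap\Omega$ and a direction $e\in\Sph$, and apply \eqref{SubDiff} with $A=D^2u(x)$ and perturbation $D^2u(x+he)-D^2u(x)$ for small $h>0$:
\begin{equation*}
S_{D^2u(x)}\bigl(D^2u(x+he)-D^2u(x)\bigr)\le F(D^2u(x+he))-F(D^2u(x))=u(x+he)^{\gamma-1}-u(x)^{\gamma-1}.
\end{equation*}
Dividing by $h>0$ and passing to the limit (the operator $S_{D^2u(x)}$ is linear and bounded on $\mathcal{S}_d$, so it commutes with limits) yields
\begin{equation*}
S_{D^2u(x)}(D^2 D_eu(x))\le(\gamma-1)u(x)^{\gamma-2}D_eu(x),
\end{equation*}
which is exactly $L_u(D_eu)\le 0$ at $x$.

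For the second derivative, I would apply \eqref{SubDiff} at $A=D^2u(x)$ twice, once with perturbation $D^2u(x+he)-D^2u(x)$ and once with $D^2u(x-he)-D^2u(x)$, and add the resulting inequalities (using linearity of $S_{D^2u(x)}$):
\begin{equation*}
S_{D^2u(x)}\bigl(D^2u(x+he)+D^2u(x-he)-2D^2u(x)\bigr)\le u(x+he)^{\gamma-1}+u(x-he)^{\gamma-1}-2u(x)^{\gamma-1}.
\end{equation*}
Dividing by $h^2$ and letting $h\to 0$ gives
\begin{equation*}
S_{D^2u(x)}(D^2 D_{ee}u(x))\le(\gamma-1)(\gamma-2)u(x)^{\gamma-3}(D_eu(x))^2+(\gamma-1)u(x)^{\gamma-2}D_{ee}u(x).
\end{equation*}

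At this point the sign of $(\gamma-1)(\gamma-2)$ becomes crucial: since $\gamma\in(1,2)$ this product is negative and, together with $u>0$, makes the first term on the right nonpositive. Discarding it leaves $S_{D^2u(x)}(D^2D_{ee}u(x))\le(\gamma-1)u(x)^{\gamma-2}D_{ee}u(x)$, which is precisely $L_u(D_{ee}u)\le 0$. The only real subtlety is justifying the existence of classical third derivatives of $u$ in $\PosS\cap\Omega$ so that the $h\to 0$ limits are valid; this is handled by bootstrapping Theorem \ref{EvansKrylov} using smoothness of $t\mapsto t^{\gamma-1}$ for $t>0$. Once that is in hand, the proof reduces to convexity of $F$ plus the sign $\gamma-2<0$ provided by the assumption $\gamma\in(1,2)$.
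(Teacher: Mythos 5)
Your proof is correct and takes essentially the same route as the paper: both use the sub-differential inequality \eqref{SubDiff} at $D^2u(x)$ applied to the first and (summed) second difference quotients of $D^2u$, pass to the limit, and then drop the nonpositive term $(\gamma-1)(\gamma-2)u^{\gamma-3}(D_eu)^2$ using $\gamma\in(1,2)$. The only addition is that you explicitly flag the need for third derivatives of $u$ in $\PosS\cap\Omega$ to justify the limits, a point the paper leaves implicit.
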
 
\begin{proof}
For a positive real number $t>0$, the definition of sub-differentials \eqref{SubDiff} gives 
$$\frac{F(D^2u(x+te))-F(D^2u(x))}{t}\ge S_{D^2u(x)}(\frac{D^2u(x+te)-D^2u(x)}{t}).$$Sending $t\to0$ and using \eqref{FirstEquation}, we get the first inequality. 

Similarly,  we have  
\begin{align*}&\frac{F(D^2u(x+te))+F(D^2u(x-te))-2F(D^2u(x))}{t^2}\\\ge &S_{D^2u(x)}(\frac{D^2u(x+te)+D^2u(x-te)-2D^2u(x)}{t^2}).\end{align*}
Sending $t\to0$ gives
$$S_{D^2u} (D_{ee}u)\le(\gamma-1)u^{\gamma-2}D_{ee}u +(\gamma-1)(\gamma-2)u^{\gamma-3}(D_eu)^2 \text{ in $\{u>0\}\cap\Omega$.}$$
 
  With $\gamma\in(1,2),$ the last term is non-positive, this leads to the second inequality.
  \end{proof}

As a direct consequence of \eqref{Convexity}, we have the following 
\begin{prop}\label{CompareWithLinear}
Suppose $u\in\mathcal{S}(\Omega)$, then we have $$L_u(u)\ge(2-\gamma)u^{\gamma-1} \text{ in $\PosS\cap\Omega$, }$$ and $$\Delta u\le \uga \text{ in $\Omega$.}$$
\end{prop}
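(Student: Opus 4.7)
The statement has two parts, and both are short consequences of the convexity package \eqref{Convexity} together with the equation \eqref{FirstEquation}. My plan is to unpack each from the sub-differential inequality \eqref{SubDiff}.

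For the first inequality, the idea is to probe the sub-differential of $F$ at $A := D^2 u$ in the direction $M = -D^2 u$. Since $S_A$ is a \emph{linear} operator (see Remark 2.1), applying \eqref{SubDiff} with this choice yields
\begin{equation*}
-S_{D^2 u}(D^2 u) \;=\; S_{D^2 u}(-D^2 u) \;\le\; F(0) - F(D^2 u) \;=\; -u^{\gamma-1},
\end{equation*}
so $S_{D^2 u}(D^2 u) \ge u^{\gamma-1}$ in $\{u>0\}\cap \Omega$. Substituting into the definition \eqref{LinearizedOperator} gives
\begin{equation*}
L_u(u) \;=\; S_{D^2 u}(D^2 u) - (\gamma-1)\, u^{\gamma-2}\,u \;\ge\; u^{\gamma-1} - (\gamma-1)\, u^{\gamma-1} \;=\; (2-\gamma)\, u^{\gamma-1},
\end{equation*}
which is the desired pointwise bound on $\{u>0\}\cap \Omega$. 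Note that this uses $F(0)=0$ crucially, but no assumption on smoothness of $F$ beyond the existence of a sub-differential provided by convexity.

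For the second inequality, I would use the normalization in \eqref{Convexity}: the trace operator is a sub-differential of $F$ at the origin. Then \eqref{SubDiff} applied at $A = 0$ with $M = D^2 u$ gives
\begin{equation*}
\Delta u \;=\; \operatorname{trace}(D^2 u) \;\le\; F(D^2 u) - F(0) \;=\; u^{\gamma-1}.
\end{equation*}
By Theorem \ref{FirstResult} (or directly from the Evans--Krylov statement in Theorem \ref{EvansKrylov}), $u$ is a classical $C^{2,\alpha}$ solution, so this pointwise inequality holds on all of $\Omega$: inside $\{u>0\}$ by the above, and on $\{u=0\}$ with the convention $0^{\gamma-1}=0$ since $\gamma\in(1,2)$ and $u$ attains its minimum there.

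Neither step is an obstacle. The only conceptual point to flag is that linearity (not just positive homogeneity) of $S_{D^2u}$ is what allows $S_{D^2u}(-D^2u) = -S_{D^2u}(D^2u)$ in the first step; this is built into the definition given in the Remark. The second step is essentially just a restatement of the normalization of $F$ combined with the fact that $u$ solves the equation classically.
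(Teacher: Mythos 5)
Your proof is correct and follows the same route as the paper: both inequalities come from probing the sub-differential inequality \eqref{SubDiff}, once at $A=D^2u$ in the direction $-D^2u$ (giving $S_{D^2u}(D^2u)\ge F(D^2u)-F(0)=u^{\gamma-1}$) and once at $A=0$ in the direction $D^2u$ using the normalization that the trace is a sub-differential at $0$. One minor remark: the closing case split over $\{u>0\}$ versus $\{u=0\}$ is unnecessary, since the pointwise inequality $\operatorname{trace}(D^2u)\le F(D^2u)-F(0)=u^{\gamma-1}$ already holds at every point of $\Omega$ where $u$ is $C^2$, i.e.\ everywhere, and your parenthetical "$u$ attains its minimum there" would in fact give $\Delta u\ge 0$, not $\le 0$ — it is not needed and slightly misleading.
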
 

\begin{proof}
By \eqref{Convexity}, we have $$S_{D^2u}(D^2u)\ge \FDu-F(0)\ge  trace(D^2u),$$  where $S_{D^2u}$ is a sub-differential of $F$ at $D^2u$ defined in \eqref{SubDiff}.

With this, the comparisons follow from $F(0)=0$ and $\FDu=\uga$. 
\end{proof} 

This gives us a useful barrier.
\begin{lem}\label{Barrier}
Suppose that $u\in\mathcal{S}(\Omega)$ and $x_0\in\R^d$. Let $\beta$ be as in \eqref{Beta}.

  There is a universal constant $a_0>0$ such that for all $a\ge a_0$, we have $$L_u(au-|x-x_0|^\beta)\ge 0 \text{ in $\PosS\cap\Omega.$}$$
\end{lem}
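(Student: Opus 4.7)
The plan is to exploit the linearity of the map $w\mapsto L_u(w)$ (linear because $S_{D^2u}$ is linear) and a weighted AM--GM inequality whose exponents are forced by the scaling identity $\beta(\gamma-1)=\beta-2$. Setting $v(x):=|x-x_0|^\beta$ and $r:=|x-x_0|$, linearity gives $L_u(au-v)=aL_u(u)-L_u(v)$. For the first piece I would apply Proposition \ref{CompareWithLinear} directly to get $L_u(u)\geq (2-\gamma)u^{\gamma-1}$. For the second piece I would compute $D^2v$ explicitly: since $\beta>2$ for $\gamma\in(1,2)$, $v\in C^2(\R^d)$ with non-negative eigenvalues $\beta(\beta-1)r^{\beta-2}$ (radial) and $\beta r^{\beta-2}$ (tangential, multiplicity $d-1$). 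Because $S_{D^2u}$ is linear and uniformly elliptic with universal ellipticity constant, comparison with the Pucci operator gives $S_{D^2u}(D^2v)\leq P^+_\Lambda(D^2v)=\Lambda\beta(\beta+d-2)r^{\beta-2}$.

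Combining the two estimates, one gets
$$L_u(au-v)\geq a(2-\gamma)u^{\gamma-1}+(\gamma-1)u^{\gamma-2}v-\Lambda\beta(\beta+d-2)r^{\beta-2}.$$
The crucial step is then to absorb the negative term by bounding the first two positive terms from below via the weighted AM--GM $\theta A+(1-\theta)B\geq A^\theta B^{1-\theta}$ applied with $\theta=\gamma-1$, $1-\theta=2-\gamma$, $A=u^{\gamma-2}v$, $B=au^{\gamma-1}$. A short calculation using the identity $\beta(\gamma-1)=\beta-2$ makes all powers of $u$ cancel, and the right-hand side collapses to $a^{2-\gamma}r^{\beta-2}$. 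This would give
$$L_u(au-v)\geq\bigl(a^{2-\gamma}-\Lambda\beta(\beta+d-2)\bigr)r^{\beta-2},$$
so any $a_0\geq(\Lambda\beta(\beta+d-2))^{1/(2-\gamma)}$ works and is universal.

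The main obstacle is finding the correct AM--GM split in the final step. Neither $au^{\gamma-1}$ nor $u^{\gamma-2}v$ alone controls $r^{\beta-2}$ uniformly in $u$: the former fails near the free boundary where $u$ is tiny, the latter fails far from it. The specific weights $\gamma-1$ and $2-\gamma$ are the \emph{only} ones that make the $u$-exponents cancel, and the resulting bound is exactly the scaling-critical quantity $r^{\beta-2}$. Everything else (linearity of $L_u$, the elementary Pucci bound, the lower bound $L_u(u)\geq(2-\gamma)u^{\gamma-1}$) is routine once this split is spotted.
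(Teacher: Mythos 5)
Your proof is correct, and the key computation checks out: the weighted AM--GM with $\theta=\gamma-1$, $1-\theta=2-\gamma$, $A=u^{\gamma-2}v$, $B=au^{\gamma-1}$ gives
$$(\gamma-1)u^{\gamma-2}v+(2-\gamma)au^{\gamma-1}\ge a^{2-\gamma}u^{(\gamma-2)(\gamma-1)+(\gamma-1)(2-\gamma)}v^{\gamma-1}=a^{2-\gamma}r^{\beta(\gamma-1)}=a^{2-\gamma}r^{\beta-2},$$
where the last equality is the identity $\beta(\gamma-1)=\beta-2$. The Pucci bound on $S_{D^2u}(D^2v)$ and the use of Proposition \ref{CompareWithLinear} are both applied correctly, and the resulting $a_0$ depends only on $d,\Lambda,\gamma$.

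Your route is genuinely different from the paper's. The paper splits $\PosS\cap\Omega$ into two regimes according to whether $\rho^2\gtrless \frac{C}{\gamma-1}u^{2-\gamma}$: in the far regime the zeroth-order piece $(\gamma-1)u^{\gamma-2}\rho^\beta$ dominates $S_{D^2u}(D^2\rho^\beta)$ and the crude bound $L_u(u)\ge0$ suffices, while in the near regime the stronger bound $L_u(u)\ge(2-\gamma)u^{\gamma-1}$ is needed and $\rho^{\beta-2}\lesssim u^{\gamma-1}$ closes the estimate. Your weighted AM--GM compresses this dichotomy into a single inequality that interpolates between exactly those two regimes, with the crossover encoded by the convex weights $\gamma-1$ and $2-\gamma$. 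Both proofs hinge on the same scaling identity $\beta(\gamma-1)=\beta-2$; yours is somewhat slicker and yields an explicit $a_0=(\Lambda\beta(\beta+d-2))^{1/(2-\gamma)}$, while the paper's two-case structure makes the geometric picture (near the free boundary versus far from it) more transparent and is the pattern that recurs in other barrier arguments in this area.
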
 
Recall that if a constant depends only on the dimension $d$, the ellipticity $\Lambda$ and the parameter $\gamma$, then it is called a \textit{universal} constant.

\begin{proof}To simplify our notation, let's define $\rho=|x-x_0|$.

Using the homogeneity of $\rho^\beta$ and ellipticity \eqref{Ellipticity}, we have 
$$S_{D^2u}(D^2\rho^{\beta})\le C\rho^{\beta-2}$$for a universal constant $C$. As a result, we have 
$$L_u(\rho^\beta)\le C\rho^{\beta-2}-(\gamma-1)u^{\gamma-2}\rho^{\beta} \text{ in $\PosS\cap\Omega.$}$$
Note that $L_u(u)\ge 0$, we have the desired estimate for any $a\ge0$   in $\Omega\cap\PosS\cap\{\frac{C}{\gamma-1}u^{2-\gamma}\le\rho^2\}$. 

On the other hand, inside $\{\frac{C}{\gamma-1}u^{2-\gamma}>\rho^2\}=\{\rho<Cu^{\frac{2-\gamma}{2}}\}$, Proposition \ref{CompareWithLinear} gives
\begin{align*} 
L_u(au-\rho^\beta)&\ge a(2-\gamma)\uga-C\rho^{\beta-2}+(\gamma-1)u^{\gamma-2}\rho^\beta\\&\ge a(2-\gamma)\uga-Cu^{\frac{2-\gamma}{2}(\beta-2)}+(\gamma-1)u^{\gamma-2}u^{\frac{2-\gamma}{2}\beta}\\&=\uga(a(2-\gamma)+\gamma-1-C)
\end{align*} for a universal constant $C$.

With $\gamma\in(1,2)$, we see the last term is non-negative if $a$ is universally large. 
\end{proof} 

As a first consequence of this barrier, we have the \textit{non-degeneracy} of the solution. 

\begin{cor}\label{NonDeg}
For $u\in \mathcal{P}_R(x_0)$, we have $$\sup_{\partial B_r(x_0)}u\ge Cr^\beta$$ for a universal constant $C$ for all $0<r<R.$ 
\end{cor}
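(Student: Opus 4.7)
The plan is to apply the barrier of Lemma \ref{Barrier} centered not at $x_0$ but at a nearby point $y\in\PosS$, run a maximum-principle argument for the linearized operator $L_u$, and then let $y\to x_0$. Without loss of generality take $x_0=0$ and fix $r\in(0,R)$. Because $0\in\partial\PosS$, for every $\eta\in(0,r)$ we may pick $y\in\PosS$ with $|y|<\eta$. Set $\rho:=r-|y|$, so that $B_\rho(y)\subset B_r$. Applying Lemma \ref{Barrier} with center $y$ gives
\[
v(x):=a_0 u(x)-|x-y|^\beta,\qquad L_u v\ge 0\ \text{in}\ \PosS\cap B_\rho(y),
\]
together with $v(y)=a_0 u(y)>0$.

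Next I run a weak maximum principle on the open set $U:=\PosS\cap B_\rho(y)$. The zeroth-order coefficient of $L_u$ is $-(\gamma-1)u^{\gamma-2}$, which is strictly negative in $\PosS$ since $\gamma\in(1,2)$. At any interior positive maximum of $v$ one would have $D^2 v\le 0$, hence $S_{D^2u}(D^2 v)\le 0$ by the linearity and uniform ellipticity of the sub-differential, while the zeroth-order term contributes $-(\gamma-1)u^{\gamma-2}\,v<0$; summing these, $L_u v<0$ at that point, contradicting $L_u v\ge 0$. Hence the positive maximum of $v$ on $\overline{U}$ is attained on $\partial U$. Since $v=-|x-y|^\beta\le 0$ on $\partial\PosS\cap\overline{B_\rho(y)}$, that maximum must be realized at some $z\in\partial B_\rho(y)\cap\overline{\PosS}$, giving
\[
a_0 u(z)-\rho^\beta=v(z)\ge v(y)>0\quad\Longrightarrow\quad u(z)\ge \rho^\beta/a_0,
\]
and the triangle inequality yields $z\in\overline{B_r}$.

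Letting $\eta\to 0$ along a sequence $y_n\to 0$ in $\PosS$ produces $z_n\in\overline{B_r}$ with $u(z_n)\ge (r-|y_n|)^\beta/a_0$; extracting a subsequential limit $z_\ast$ and using continuity of $u$ gives $u(z_\ast)\ge r^\beta/a_0$. To transfer this bound from $\overline{B_r}$ to $\partial B_r$, I invoke \eqref{CompareWithPucci}: since $P^+_\Lambda(D^2 u)\ge F(D^2 u)=u^{\gamma-1}\ge 0$ in $B_r$, the function $u$ is a Pucci subsolution, so its maximum on $\overline{B_r}$ is attained on $\partial B_r$. This yields $\sup_{\partial B_r}u\ge r^\beta/a_0$, and the corollary follows with $C=1/a_0$.

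The main conceptual move is that the barrier cannot be centered at $x_0$ itself: there $v(x_0)=0$ and the same maximum-principle comparison only produces $v\le 0$ everywhere, which gives no lower bound on $u$. Shifting the center to a point of strict positivity so that the barrier is genuinely positive somewhere, and only afterwards passing to the limit $y\to x_0$, is the mechanism that extracts a nondegeneracy constant.
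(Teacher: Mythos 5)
Your proof is correct and follows essentially the same route as the paper: both center the barrier of Lemma \ref{Barrier} at a nearby positive point $y$ (the paper takes any $y_0\in\PosS$ with $|x_0-y_0|<r/2$ and works directly on $B_r(x_0)\cap\PosS$, while you shrink to $B_{r-|y|}(y)\cap\PosS$ and send $y\to x_0$), then run the maximum principle for $L_u$ to force the barrier to be positive somewhere on the sphere. Your closing Pucci subsolution step is actually superfluous: since $z_n\in\partial B_{r-|y_n|}(y_n)$ satisfies $r-2|y_n|\le|z_n|\le r$, any subsequential limit $z_*$ already lies on $\partial B_r$, so the bound lands on the sphere without further argument.
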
 

\begin{proof}
Pick $y_0\in\PosS$ with $|x_0-y_0|<\frac{1}{2}r$.
We let $\rho=|x-y_0|$.

For $a_0$ as in Lemma \ref{Barrier}, define 
$$h=a_0u-\rho^\beta$$ inside the domain $\Omega=B_r(x_0)\cap\{u>0\}.$ 

With $h(y_0)>0$ and $L_u(u)\ge 0$ in $\Omega$,  the maximum principle implies $\sup_{\partial\Omega}h> 0.$

Note that $$\partial\Omega=(\partial\PosS\cap B_r(x_0))\cup (\partial B_r(x_0)\cap\{u>0\}),$$ and that $h\le 0$ along $\partial\PosS\cap B_r(x_0)$, this implies $$\sup_{\partial B_r(x_0)\cap\{u>0\}}h=\sup_{\partial\Omega} h>0.$$The desired estimate follows from $\rho>\frac{1}{2}r$ along $\partial B_r(x_0)$.
\end{proof} 

The improvement of monotonicity is a useful technique in many \FB problems. We prove a version for our problem.
\begin{lem}\label{ImproveMonotonicity}
Suppose $u\in\mathcal{S}(B_1)$.   Let $a_0$ denote the constant from Lemma \ref{Barrier} and $\beta$ be the parameter in \eqref{Beta}.

Suppose that for some direction $e\in\Sph$, and some constants $\kappa>0$ and $0<\eta\le\frac{1}{2^{\beta+1}a_0}$, we have $$D_eu\ge\kappa \text{ in $B_1\cap\{u\ge\eta\}$ }.$$ Then there is $\eps>0$, depending only on $\kappa$, such that $$D_eu\ge-\eps \text{ in $B_1$}$$ implies $$D_eu\ge 0 \text{ in $B_{1/2}$.}$$
\end{lem}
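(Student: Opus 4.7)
The strategy is a comparison argument built on Lemma \ref{Barrier}. By Proposition \ref{DerEquation}, $D_eu$ is a super-solution of the linearized operator $L_u$ from \eqref{LinearizedOperator} in $\{u>0\}\cap B_1$, while Lemma \ref{Barrier} supplies the sub-solution $a_0u - |x-x_0|^\beta$. The plan is to scale the latter so that it lies beneath $D_eu$ on the boundary of $\Omega := B_1 \cap \{0 < u < \eta\}$, and then invoke the comparison principle for $L_u$ to obtain the corresponding inequality throughout $\Omega$.

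First I would dispose of the trivial regions: for $x_0 \in B_{1/2}$ with $u(x_0) \ge \eta$ the hypothesis gives $D_eu(x_0) \ge \kappa$, while for $x_0 \in \{u = 0\}$ the over-determined free-boundary condition (together with the fact that $\nabla u$ vanishes in the interior of the contact set) forces $D_eu(x_0) = 0$. So the substantive case is $0 < u(x_0) < \eta$.

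For such $x_0$, I would introduce the barrier $\psi(x) := C(a_0 u(x) - |x-x_0|^\beta)$ with a constant $C > 0$ to be chosen. Lemma \ref{Barrier} yields $L_u\psi \ge 0$, so the comparison principle reduces to verifying $\psi \le D_eu$ on each of the three pieces of $\partial\Omega$. On $\partial\{u > 0\} \cap B_1$ we have $D_eu = 0$ while $\psi = -C|x-x_0|^\beta \le 0$. On $\{u = \eta\}$, the hypothesis gives $D_eu \ge \kappa$, and using $a_0\eta \le 2^{-(\beta+1)}$ we get $\psi \le Ca_0\eta \le C/2^{\beta+1}$, which is $\le \kappa$ as long as $C \le 2^{\beta+1}\kappa$. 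On $\partial B_1 \cap \{u \le \eta\}$, we have $D_eu \ge -\eps$, and since $|x-x_0| \ge 1/2$ for $x_0 \in B_{1/2}$, we compute $\psi \le C(a_0\eta - 2^{-\beta}) \le -C/2^{\beta+1}$, which is $\le -\eps$ as long as $C \ge 2^{\beta+1}\eps$. Taking $\eps := \kappa$ (or smaller) and $C := 2^{\beta+1}\kappa$ reconciles all the constraints. The comparison principle for $L_u$, whose zero-order coefficient $-(\gamma-1)u^{\gamma-2}$ has the right sign since $\gamma \in (1,2)$, then delivers $D_eu \ge \psi$ on $\Omega$, so in particular $D_eu(x_0) \ge \psi(x_0) = Ca_0 u(x_0) \ge 0$.

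The main obstacle is the simultaneous satisfaction of the two nontrivial boundary inequalities, namely the one on $\{u = \eta\}$ and the one on $\partial B_1$, with a single multiplicative constant $C$. The arithmetic bound $\eta \le 1/(2^{\beta+1}a_0)$ in the hypothesis is precisely the inequality that squeezes these two thresholds against each other and makes a common $C$ available; it is also what allows $\eps$ to depend only on $\kappa$, not on $\eta$. Beyond this balancing, the remaining steps are standard applications of the weak minimum principle for the linear uniformly elliptic operator $L_u$.
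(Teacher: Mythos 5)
Your proof is correct and essentially identical to the paper's: you use the same barrier $a_0u - |x-x_0|^\beta$ from Lemma \ref{Barrier}, the same supersolution property of $D_eu$ from Proposition \ref{DerEquation}, the same choice $\eps = \kappa$ and multiplicative constant $2^{\beta+1}\kappa$, the same decomposition of $\partial\Omega$ into three pieces, and the same application of the comparison principle for $L_u$ (the paper's auxiliary function $h = D_eu - 2^{\beta+1}\kappa(a_0u - \rho^\beta)$ is just $D_eu - \psi$ in your notation).
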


\begin{proof}
We  choose $\eps=\kappa$. 

Note that we have $D_eu\ge 0$ inside $\{u\ge\eta\}$. Since $u\ge0$, we also have $D_eu=0$ in $\{u=0\}.$ It suffices to prove $D_eu\ge0$ in $B_{1/2}\cap\{0<u<\eta\}.$

To see this, we pick a point $x_0\in B_{1/2}\cap\{0<u<\eta\}.$ Define $$\Omega=B_1\cap\{0<u<\eta\},$$ $$\rho=|x-x_0|,$$  and $$h=D_eu-2^{\beta+1}\kappa(a_0u-\rho^\beta).$$

We first investigate the value of $h$ along $\partial\Omega=(\partial B_1\cap \{0<u<\eta\})\cup (B_1\cap\partial\PosS)\cup (B_1\cap\partial\{u<\eta\}).$

Along  $\partial B_1\cap \{0<u<\eta\}$, we have $\rho\ge\frac{1}{2}$ and $u<\eta$, thus $$h\ge-\eps-2^{\beta+1}\kappa(a_0\eta-\frac{1}{2^\beta}).$$ Since $\eta\le\frac{1}{2^{\beta+1}a_0}$ and $\eps=\kappa$, we have $h\ge 0$ along $\partial B_1\cap \{0<u<\eta\}$.

Along  $B_1\cap\partial\PosS,$ we have $u=D_eu=0$, thus $h\ge 0.$

Along $B_1\cap\partial\{u<\eta\}$, we have $u=\eta$ and $D_eu\ge\kappa$. Consequently,  
$$h\ge\kappa-2^{\beta+1}\kappa(a_0\eta)\ge0.$$In the last step we used the assumption $\eta\le\frac{1}{2^{\beta+1}a_0}$.

To summarize, $$h\ge 0\text{ along $\partial \Omega.$}$$

 Together with Lemma \ref{Barrier},  Proposition \ref{DerEquation} implies $L_u(h)\le 0$ in $\Omega$. It then follows from the maximum principle that $$h\ge 0 \text{ in $\Omega$.}$$ At the point $x_0$, this  implies $D_eu(x_0)\ge 0.$
\end{proof}

\section{Harnack estimate and consequences}
Here we prove the Harnack-type estimate, Theorem \ref{HarnackThm}. With this, we get the universal regularity estimate in Theorem \ref{FirstResult}, as well as control over several scaling invariant quantities. These are useful to study the blow-up profiles.

Theorem \ref{HarnackThm} follows directly from the following lemma.
\begin{lem}\label{HarnackLem}
Suppose that $u\in\mathcal{S}(B_1(0))$. Then $$\sup_{B_{1/2}}u\le C(u(0)+1)$$ for a universal constant $C$.
\end{lem}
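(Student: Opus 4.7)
The plan is to combine the Pucci sandwich satisfied by $u$ with a Young-type absorption that exploits the sublinearity $\gamma-1\in(0,1)$, and then close the estimate by applying the weak Harnack inequality on the super-solution side and the local maximum principle on the sub-solution side. The key observation is that the right-hand side $u^{\gamma-1}$, although not a priori bounded, is sublinear in $u$, so it can be replaced by a linear term plus a universal constant.

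First I would record the two Pucci inequalities coming from convexity of $F$ and $F(0)=0$: by \eqref{CompareWithPucci} and its dual,
$$P^+_\Lambda(D^2u)\ge F(D^2u)=u^{\gamma-1}\ge 0,\qquad P^-_\Lambda(D^2u)\le u^{\gamma-1}\quad\text{in }B_1,$$
both in the viscosity sense. Next, an elementary one-variable calculation (maximize $s^{\gamma-1}-\tfrac12 s$ over $s\ge 0$, which is finite because $\gamma-1<1$) produces a universal constant $C_0=C_0(\gamma)$ with $s^{\gamma-1}\le\tfrac12 s+C_0$ for every $s\ge 0$. Substituting into the lower Pucci inequality gives
$$P^-_\Lambda(D^2u)-\tfrac12 u\le C_0\quad\text{in }B_1.$$

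Next I would apply the weak Harnack inequality for nonnegative Pucci super-solutions with a bounded nonnegative zeroth-order term (a standard extension of the Krylov--Safonov machinery in \cite{CC}). On $B_1$, with output radius $3/4$, it produces a universal exponent $\epsilon>0$ and a universal constant $C$ with
$$\|u\|_{L^\epsilon(B_{3/4})}\le C\bigl(\inf_{B_{3/4}} u+C_0\bigr)\le C(u(0)+1).$$
On the other side, the $L^\epsilon$-form of the local maximum principle (a routine strengthening of Theorem \ref{LocMaxPple} obtained by the usual covering argument) applied to the Pucci sub-solution $u$ on $B_{3/4}$ yields
$$\sup_{B_{1/2}} u\le C\,\|u\|_{L^\epsilon(B_{3/4})}.$$
Chaining the two estimates delivers the claim.

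The only technical subtlety is matching radii: both the standard weak Harnack and the standard local maximum principle come with natural ``half-ball'' input/output, so the intermediate ball $B_{3/4}$ is used to bridge them by a routine covering, and the resulting constants depend only on the fixed gaps $3/4-1/2$ and $1-3/4$. The sublinearity $\gamma<2$ is used critically in the Young-type absorption step; if $\gamma\ge 2$ the bound $s^{\gamma-1}\le\tfrac12 s+C_0$ would fail with finite $C_0$, and the strategy would collapse.
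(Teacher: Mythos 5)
Your argument is correct in substance and takes a genuinely different route from the paper. The paper's proof (following \cite{P}) compares $u$ against the solution $v$ of the \emph{Laplacian} Alt-Phillips Dirichlet problem with the same boundary data, establishes a spherical-average growth dichotomy for $v$ via an explicit Green's function/H\"older computation, transfers it to $u$ using $\Delta u\le u^{\gamma-1}$ (Proposition~\ref{CompareWithLinear}), and then integrates the dichotomy over radii to bound $\|u\|_{L^1(B_1)}$ before invoking Theorem~\ref{LocMaxPple}; the only fully nonlinear machinery used is the crude local maximum principle in the form already quoted. You instead replace the nonlinear right-hand side by a linear one plus a constant via the Young-type bound $s^{\gamma-1}\le\tfrac12 s+C_0$, turning $u$ into a nonnegative Pucci super-solution for an operator with a bounded zeroth-order term, and then run weak Harnack against the $L^\epsilon$ local maximum principle. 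This is shorter and avoids the auxiliary linear problem entirely, but it leans on a version of the Krylov--Safonov weak Harnack with a lower-order term, which is not stated as such in \cite{CC} (whose Theorem~4.8 is for the classes $\overline{S}(\Lambda,f)$, $\underline{S}(\Lambda,f)$ without zeroth-order coefficients); the paper's argument is deliberately self-contained modulo only what is quoted. Two small remarks: (i) the zeroth-order coefficient you produce is $c=-\tfrac12$, i.e.\ \emph{nonpositive}, which is the sign compatible with the maximum principle---your phrase ``bounded nonnegative zeroth-order term'' should read ``bounded nonpositive zeroth-order coefficient''; (ii) the inequality $P^+_\Lambda(D^2u)\ge F(D^2u)$ and its analogue $P^-_\Lambda(D^2u)\le F(D^2u)$ follow from uniform ellipticity and $F(0)=0$ alone, not from convexity. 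With those cosmetic fixes and an explicit reference for the weak Harnack with bounded lower-order terms, your proof is a valid alternative.
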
 
Recall that a constant is universal if it depends only on $d$, $\Lambda$, and $\gamma$. Also recall the definition of several classes of solutions from Definition \ref{SolClass}.

\begin{proof}
We divide the proof into three steps. 

In the first step, we prove a supersolution property for the classical \AP equation. This step is essentially from \cite{P}, and is recorded here for completeness. In the second step, we show that our solution inherits this supersolution property. In the third step, we conclude the proof by using a subsolution property. 

\textit{Step 1: the supersolution property for the classical equation.}

Let $v$ be the solution to 
$$\begin{cases}
\Delta v=v^{\gamma-1}&\text{ in $B_1$,}\\
v=u &\text{ on $\partial B_1.$}
\end{cases}
$$In this step, we  establish the following claim:

\textit{Claim}: There is a universal constant, $A$, such that $$\text{if }\avint_{\partial B_1}v\ge A,\text{ then }v(0)\ge \frac{1}{2}\avint_{\partial B_1}v.$$Here we denote by $\avint_{\partial B_r}v$ the average of $v$ over $\partial B_r.$

With $u\ge0$,  the maximum principle implies $v\ge 0$  in $B_1$. In particular, $$\Delta v\ge0 \text{ in $B_1$,}$$ and $\avint_{\partial B_r}v$  is an increasing function of $r$. 

Let $G$ denote the Green's function of $B_1$ with a pole at $0$, that is, $G(x)=\log|x|$ if $d=2$ and $G(x)=-|x|^{2-d}+1$ if $d\ge3.$ We have
$$\avint_{\partial B_1}v-v(0)=-C\int_{B_1}G\Delta v=-C\int_{B_1}Gv^{\gamma-1}=-C\int_0^1dr\int_{\partial B_r}Gv^{\gamma-1}$$for a positive dimensional constant $C$. 

Consequently, we can use H\"older's inequality to get 
\begin{align*}
\avint_{\partial B_1}v-v(0)&\le C\int_0^1dr[\int_{\partial B_r}(r^{(d-1)(\gamma-1)}|G|)^{\frac{1}{2-\gamma}}]^{2-\gamma}\cdot(\avint_{\partial B_r}v)^{\gamma-1}\\&\le C(\avint_{\partial B_1}v)^{\gamma-1}\\&=C(\avint_{\partial B_1}v)^{\gamma-2}\avint_{\partial B_1}v.
\end{align*}

Since $\gamma-2<0,$
 the right-hand side is less than $\frac{1}{2}\avint_{\partial B_1}v$ if $A$ is large. This gives the \textit{Claim}. 

\textit{Step 2: the supersolution property for the fully nonlinear equation.}

In this step, we prove that for our solution $u$ to the \FN problem, we have 

\textit{Claim}: There is a positive universal constant $A$ such that $$\text{if }\avint_{\partial B_1}u\ge A,\text{ then }u(0)\ge \frac{1}{2}\avint_{\partial B_1}u.$$

Let $v$ be the solution from \textit{Step 1}. It suffices to prove $$u\ge v\text{ in $B_1$}.$$
To this end, let $t_0=\min_{\overline{B}_1}(u-v)=(u-v)(x_0)$ for some $x_0\in\overline{B}_1.$ We need to show that $t_0\ge 0.$

If $x_0\in\partial B_1$, then $t_0=0$. Otherwise, $x_0\in B_1$ is a local minimum of $(u-v)$, and we have $$\Delta u(x_0) \ge\Delta v(x_0). $$The second part of Proposition \ref{CompareWithLinear} gives $$\uga(x_0)\ge\Delta u(x_0)\ge\Delta v(x_0)=v^{\gamma-1}(x_0).$$Together with $u(x_0)-v(x_0)=t_0$, this implies $t_0\ge 0.$

\textit{Step 3: Conclusion of the proof.}

Using the scaling property as in \eqref{Rescaling}, \textit{Step 2} gives, for all $0<r<1$, $$\text{ either }u(0)\ge \frac{1}{2}\avint_{\partial B_r}u \text{, or } \avint_{\partial B_r}u\le Ar^\beta.$$ Consequently,  $$\int_{B_1}u=C(d)\int_0^1dr\avint_{\partial B_r}r^{d-1}u\le C(d)(\int_{0}^1Ar^{d-1+\beta}dr+\int_0^12u(0)r^{d-1}dr)\le C(u(0)+1)$$ for a universal $C$. 

Now with $P^+_\Lambda(D^2u)\ge \FDu=\uga\ge 0$ as in \eqref{CompareWithPucci}, we can invoke Theorem \ref{LocMaxPple} to conclude this proof. 
\end{proof} 

Theorem \ref{HarnackThm} follows from Lemma \ref{HarnackLem} by scaling. Theorem \ref{FirstResult} is also a direct consequence. We briefly sketch the ideas behind it. 

\begin{proof}[Sketch of the proof of Theorem \ref{FirstResult}]
Let $u$ be as in the statement of Theorem \ref{FirstResult}. 

Theorem \ref{HarnackThm} implies that $u$ is  locally bounded. This implies that the right-hand side of \eqref{FirstEquation}, $\uga$,  is locally  bounded. Using the first part of Theorem \ref{EvansKrylov}, we have $u\in C_{loc}^{1,\alpha}$. As a result, the right-hand side, $\uga$, is H\"older continuous. We can then use the second part of Theorem \ref{EvansKrylov} to get $u\in C^{2,\alpha}.$

To get the bound for $\|u\|_{\mathcal{C}^{2,\alpha}(B_{1/2})}$ if $u$ solves the problem in $B_1$, we can use Lemma \ref{HarnackLem} to get $u\le C(u(0)+1)$ in $B_{7/8}.$ This gives $\|u\|_{C^{1,\alpha}(B_{3/4})}\le C(u(0)+1)$ by invoking the first part of Theorem \ref{EvansKrylov}. The second part of Theorem \ref{EvansKrylov} then gives the desired estimate. \end{proof} 

We gather a few quick consequences of Theorem \ref{HarnackThm}.

\begin{cor}\label{GrowthRate}
Suppose $u\in \mathcal{P}_R(x_0)$. Then $$\sup_{B_r(x_0)}u\le Cr^\beta$$ for all $0<r<\frac{1}{2}R$ for a universal constant $C$.
\end{cor}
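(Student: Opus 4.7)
The plan is to apply Theorem \ref{HarnackThm} directly at the free boundary point and exploit the fact that $u(x_0)=0$ to kill the infimum term.

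More precisely, for any $0<r<R/2$, the ball $B_{2r}(x_0)$ is contained in $B_R(x_0)$, so $u$ solves \eqref{FirstEquation} in $B_{2r}(x_0)$. Applying the translated version of Theorem \ref{HarnackThm} on $B_{2r}(x_0)$ yields
\begin{equation*}
\sup_{B_r(x_0)} u \le C\Bigl(\inf_{B_r(x_0)} u + (2r)^\beta\Bigr)
\end{equation*}
for a universal constant $C$. Since $x_0\in\partial\{u>0\}$ forces $u(x_0)=0$ and since $u\ge 0$, the infimum on the right-hand side vanishes. Absorbing the factor $2^\beta$ into the constant gives $\sup_{B_r(x_0)} u\le C r^\beta$, as desired.

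There is no real obstacle; the corollary is immediate once the Harnack-type estimate is available, and the only content beyond quoting Theorem \ref{HarnackThm} is the observation that a free boundary point forces $\inf u=0$ on every ball centered there.
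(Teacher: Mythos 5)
Your proof is correct and is exactly the intended one-line deduction from Theorem \ref{HarnackThm}: apply the Harnack estimate on $B_{2r}(x_0)\subset B_R(x_0)$, note that $x_0\in\partial\{u>0\}$ and continuity force $u(x_0)=0$ so $\inf_{B_r(x_0)}u=0$, and absorb $2^\beta$ into the constant. The paper does not spell out a proof of this corollary (it simply lists it among the ``quick consequences'' of the Harnack estimate), and your argument is the natural one it has in mind.
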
 
The corresponding lower bound is proved in Corollary \ref{NonDeg}. 

A similar estimate was obtained by Teixeira in a different context \cite{T}. His estimate, however, depends on the $\mathcal{L}^\infty$-norm of $u$, while ours is universal. 

We also have a universal bound on the following scaling invariant quantity.
\begin{cor}\label{HessianBound}
Suppose $u\in \mathcal{P}_R(x_0)$. Then 
$$\sup_{B_{R/2}(x_0)\cap\PosS}|D^2u/\uga|\le C$$ for a universal $C$.
\end{cor}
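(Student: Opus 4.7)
The plan is to sidestep any estimate on $\mathrm{dist}(y_0,\{u=0\})$ and instead rescale $u$ around $y_0$ using its own height $\lambda := u(y_0)$ as the normalizing factor. Since $|D^2u/u^{\gamma-1}|$ is invariant under the $\beta$-scaling with $\beta=2/(2-\gamma)$ (as noted earlier in the paper), the desired bound will follow from a universal $C^2$-bound on the rescaled function at the origin.

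First I would fix $y_0\in B_{R/2}(x_0)\cap\{u>0\}$, set $\lambda=u(y_0)>0$, and put $\rho=\lambda^{1/\beta}$. Corollary \ref{GrowthRate} gives $\lambda\le C|y_0-x_0|^\beta\le CR^\beta$, so $\rho\le C^{1/\beta}R$; hence, for a universal $\delta\in(0,1)$ chosen small enough, $B_{\delta\rho}(y_0)\subset B_R(x_0)$. Theorem \ref{HarnackThm} applied on this ball yields
$$\sup_{B_{\delta\rho/2}(y_0)}u\le C_H\!\left(\inf_{B_{\delta\rho/2}(y_0)}u+\delta^\beta\lambda\right).$$
Since $\lambda=u(y_0)$ lies between this infimum and supremum, shrinking $\delta$ further so that $C_H\delta^\beta\le 1/2$ forces $c\lambda\le u\le C\lambda$ on $B_{\delta\rho/2}(y_0)$ for universal $c,C>0$. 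This step is the crux of the argument: Harnack supplies a quantitative, scale-invariant positivity neighborhood of $y_0$ without needing any control on the distance from $y_0$ to the contact set.

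Next, the rescaling $v(x):=u(\rho x+y_0)/\lambda$ puts everything in normalized form. A short computation using $\lambda/\rho^2=\lambda^{\gamma-1}$ (equivalent to $1-2/\beta=\gamma-1$) shows that $v$ solves $\tilde F(D^2v)=v^{\gamma-1}$ on $B_\delta$, where
$$\tilde F(M):=\lambda^{-(\gamma-1)}F(\lambda^{\gamma-1}M).$$
It is immediate to check that $\tilde F$ is convex, uniformly elliptic with the same constant $\Lambda$ as $F$, and satisfies $\tilde F(0)=0$ with the trace as a sub-differential at $0$; thus all of \eqref{Ellipticity}-\eqref{Convexity} carry over unchanged. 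By the previous step, $v$ takes values in the universal interval $[c,C]$ on $B_{\delta/2}$, so the right-hand side $v^{\gamma-1}$ is universally bounded, bounded away from $0$, and Hölder continuous.

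Finally, Theorem \ref{FirstResult} (applied to $v$ after the routine dilation of $B_{\delta/2}$ onto the unit ball, which only affects constants by a universal factor) gives $|D^2v(0)|\le C$ universally. Unwinding the scaling,
$$D^2v(0)=\frac{\rho^2}{\lambda}\,D^2u(y_0)=\lambda^{-(\gamma-1)}\,D^2u(y_0)=\frac{D^2u(y_0)}{u(y_0)^{\gamma-1}},$$
and the bound follows. The main technical point to monitor in carrying out this plan is bookkeeping of universal constants along the whole chain (Harnack, the ellipticity and convexity of $\tilde F$, and Evans--Krylov for $v$); this works out because the rescaling preserves $\Lambda$ exactly and $\delta$ is itself universal.
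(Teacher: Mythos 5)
Your proposal is correct and follows essentially the same strategy as the paper's own proof: rescale around $y_0$ at the intrinsic scale $u(y_0)^{1/\beta}$ (so that the scale-invariant quantity $D^2u/u^{\gamma-1}$ is unchanged), use the Harnack estimate to control the rescaled solution near the origin, and then invoke interior Schauder/Evans--Krylov estimates to bound the Hessian of the rescaled function. The only cosmetic differences are that the paper normalizes so $u_{y_0,r}(0)=M$ for a universal $M$ chosen to keep $r<1/2$ rather than inserting an extra universal factor $\delta$, and the paper gets H\"older continuity of the right-hand side via a $C^{1,\alpha}$ bound and the elementary inequality $|a^s-b^s|\le|a-b|^s$ for $s=\gamma-1\in(0,1)$, whereas you obtain it from the two-sided Harnack bound $c\lambda\le u\le C\lambda$; both routes are standard and valid.
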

\begin{proof}
It suffices to prove the estimate for $x_0=0$ and $R=1$.

For any $y_0\in B_{1/2}\cap\PosS$, Lemma \ref{HarnackLem} implies that we can find a universal constant, $M$, such that $$r:=(\frac{u(y_0)}{M})^{\frac{1}{\beta}}<\frac{1}{2}.$$ In particular, $u\in \mathcal{S}(B_r(y_0)).$

Consequently, for $u_{y_0,r}$ defined as in \eqref{Rescaling}, we have $u_{y_0,r}\in \mathcal{S}^{F_r}(B_1(0))$ for $F_r$ defined as in \eqref{RescaledOperator}. 

Lemma \ref{HarnackLem} implies $$\sup_{B_{1/2}}u_{y_0,r}\le C(u_{y_0,r}(0)+1)=C(M+1).$$ 

We can apply the first part of Theorem \ref{EvansKrylov} with $f=u_{y_0,r}^{\gamma-1}$ to get $|\nabla u_{y_0,r}|\le C$ in $B_{1/4}.$ This then implies the $(\gamma-1)$-H\"older semi-norm of $f$ satisfies $[f]_{C^{\gamma-1}(B_{1/4})}\le C$. We can  apply the second half of Theorem \ref{EvansKrylov} to get $|D^2u_{y_0,r}|(0)\le C$.

By the definition of $r$, this gives $|D^2u(y_0)/\uga(y_0)|\le C.$
\end{proof}

\section{Blow-up profile at a regular point}
In this section, we begin our analysis of the \FB  near a regular point. The first step is to study the blow-up profiles. 

As mentioned in the introduction, without monotonicity formulae, blow-ups are in general not homogeneous,  their contact sets not necessarily conic. It is challenging to classify blow-up profiles. Here we intend to get  geometric information in terms of monotonicity properties of blow-ups. 

Suppose $u\in\mathcal{P}^F_1(0)$ with $0\in Reg(u)$ (Definitions \ref{SolClass} and \ref{RegSing}) for some $F$ satisfying \eqref{Ellipticity} and \eqref{Convexity} together with either \eqref{Dable} or \eqref{HomogeneousF}.

The rescalings $u_r(x)=\frac{1}{r^\beta}u(rx)$ as in \eqref{Rescaling} satisfy $$u_r\in\mathcal{P}_{1/r}^{F_r}(0),$$ where $F_r$ is defined as in \eqref{RescaledOperator}. Results from the previous section imply that this family $\{u_r\}$ is locally uniformly  bounded in $C^{2,\alpha}.$ As a result, we have a function $u_0$, such that up to a subsequence, $$u_r\to u_0 \text{ locally uniformly in $C^2(\R^d)$.}$$

For $F$ satisfying \eqref{Dable},  following Theorem \ref{Stability} and Corollary \ref{NonDeg}, we can use \eqref{LinearizingAtZero} to get $u_0\in\mathcal{P}_\infty^\Delta(0).$ 
If $F$ satisfies  \eqref{HomogeneousF}, we use \eqref{Invariance} to get $u_0\in\mathcal{P}_\infty^F(0).$ 

Note that the Laplacian is  homogeneous, we see that under both cases, we have $$u_0\in\mathcal{P}_\infty^F(0)$$ for some $F$ satisfying \eqref{Ellipticity}, \eqref{Convexity} and \eqref{HomogeneousF}.

Positive density of the contact set, \eqref{PositiveDensity}, gives some $\delta>0$ and a sequence $r_h\to0$ such that $$|\{u_{r_h}=0\}\cap B_1|\ge\delta.$$ If we perform the procedure above with this particular sequence $r_h$, then we have $$|\{u_0=0\}|>0.$$

To sum up, we obtain a blow-up limit satisfying the hypothesis in the following proposition, which is the main result of this section.

\begin{prop}\label{BlowUp}
Suppose that $u\in\mathcal{P}^F_\infty(0)$ with $|\ZeroS|>0$ for some $F$ satisfying \eqref{Ellipticity}, \eqref{Convexity} and \eqref{HomogeneousF}. 

Then up to a rotation, we have the following:
 
Given $\delta>0$, there is $r=r_\delta>0$ such that 
\begin{equation}\label{BlowUpMonotonicity}\begin{cases}D_eu\ge 0 &\text{ in $B_{r}$,}\\ D_eu\ge c_0\delta r^{\beta-1} &\text{ in $B_{r}\cap\{u\ge\frac{1}{2^{\beta+1}a_0}r^\beta$\}}
\end{cases}
\end{equation}for all $e\in\Sph$ with $e_1\ge\delta$. 
Here $c_0$ is a universal constant.\end{prop}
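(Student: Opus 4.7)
The plan is to argue by blow-up and compactness, using the improvement-of-monotonicity lemma (Lemma~\ref{ImproveMonotonicity}) to pass from quantitative monotonicity on the bulk set $\{u \ge c\,r^\beta\}$ to sign information on all of $B_r$. Since $F$ is homogeneous, by \eqref{Invariance} the rescalings $u_r(x) = u(rx)/r^\beta$ remain in $\mathcal{P}^F_\infty(0)$, and the universal $C^{2,\alpha}$ bound of Theorem~\ref{FirstResult} together with Corollary~\ref{HessianBound} makes the family $\{u_r\}_{r>0}$ precompact in $C^2_{\mathrm{loc}}(\R^d)$. The hypothesis $|\{u=0\}|>0$, combined with the blow-up construction preceding the proposition, lets me pick a subsequence $r_h \downarrow 0$ along which $u_{r_h} \to u_0$ in $C^2_{\mathrm{loc}}$, with $u_0 \in \mathcal{P}^F_\infty(0)$ and $|\{u_0=0\}\cap B_1| \ge \delta_0$ for a universal $\delta_0 > 0$.

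The heart of the argument is to show, up to a rotation, that $D_{e_1} u_0 \ge c_1$ on a bulk set $\{u_0 \ge \eta\} \cap B_1$ for universal $c_1,\eta > 0$, and then to extend this to any direction $e$ with $e \cdot e_1 \ge \delta$. Proposition~\ref{DerEquation} and the convexity \eqref{Convexity} of $F$ make each $D_e u_0$ a supersolution of the linearized operator $L_{u_0}$, while Lemma~\ref{Barrier} supplies a comparison function. The large contact set of $u_0$, together with the nondegeneracy in Corollary~\ref{NonDeg}, forces through a maximum principle argument on $L_{u_0}$ the support of $u_0$ to be half-space-like near the origin, with $e_1$ pointing into $\{u_0>0\}$. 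Decomposing $D_e u_0 = (e\cdot e_1) D_{e_1} u_0 + D_v u_0$ with $v \perp e_1$ and controlling the transverse term by the approximate $e_1$-only dependence of $u_0$ on the bulk set then gives $D_e u_0 \ge c_0 \delta$ on $\{u_0 \ge \eta\}\cap B_1$ for all admissible $e$, for a universal $c_0$.

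To conclude, I transfer this to $u$ via the $C^2$ convergence $u_{r_h} \to u_0$. For $h$ large, $D_e u_{r_h} \ge c_0\delta/2$ on $\{u_{r_h}\ge \eta\}\cap B_1$, and $D_e u_{r_h} \ge -\eps$ on $B_1$ with $\eps$ as small as desired. Lemma~\ref{ImproveMonotonicity}, applied at the scale of $u_{r_h}$ with $\kappa = c_0\delta/2$, promotes this to $D_e u_{r_h} \ge 0$ on $B_{1/2}$; rescaling back to $u$ on $B_{r_h/2}$ yields \eqref{BlowUpMonotonicity} with $r = r_h/2$, all constants staying universal.

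The main obstacle is the middle step: without a Weiss- or Spruck-type monotonicity formula in the fully nonlinear setting, I cannot simply classify $u_0$ as a homogeneous half-plane solution, so the directional monotonicity has to come from softer PDE arguments built on convexity of $F$, nondegeneracy, the barrier of Lemma~\ref{Barrier}, and the maximum principle on $L_{u_0}$. Ensuring that the constant $c_0$ produced this way is independent of the blow-up subsequence, so that it is genuinely universal as the statement requires, is the technical crux of the proof.
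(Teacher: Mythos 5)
Your outer framework matches the paper's: blow up the global $u$ along $r_h\downarrow 0$ to get $u_0\in\mathcal{P}^F_\infty(0)$ with a nontrivial contact set, establish quantitative directional monotonicity of $u_0$ on a bulk set together with nonnegativity of $D_e u_0$, and transfer to $u$ by $C^2$ convergence and Lemma~\ref{ImproveMonotonicity}. That concluding transfer is correct and is exactly what the paper does. The gap is the middle step, which you flag as the crux but never fill. You assert that one ``cannot simply classify $u_0$'' and replace classification with a ``maximum principle argument on $L_{u_0}$'' forcing the support of $u_0$ to be ``half-space-like near the origin,'' together with an ``approximate $e_1$-only dependence'' used to control the transverse part of $D_e u_0$. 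Neither clause has a stated mechanism: positive density of $\{u_0=0\}$ alone gives no geometric control, Corollary~\ref{NonDeg} provides growth of $u_0$ but not half-space structure, and the claimed $e_1$-only dependence is assumed, not derived. You also cannot track the universality of $c_0$ from these claims, as you yourself acknowledge.

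The missing ingredient is Lemma~\ref{GlobConv}: every $u\in\mathcal{P}^F_\infty(0)$ is convex. The paper proves this by a sliding argument on the scaling-invariant ratio $D_{11}u/u^{\gamma-1}$ (bounded by Corollary~\ref{HessianBound}), passing to a normalized limit $w$ in which this ratio attains its infimum at an interior point, and showing that $g = D_{11}w + \ell w^{\gamma-1}$ is an $L_w$-supersolution, contradicting the strong maximum principle. Granted convexity, $\{u=0\}$ is a convex set of positive measure, so a further blow-up has a convex cone as contact set, and Lemma~\ref{HSSolution} --- again via convexity of $u$ and the strong maximum principle, with no Weiss- or Spruck-type monotonicity formula --- identifies the blow-up as $c_{\gamma,e}[(x\cdot e)_+]^\beta$ with $c_{\gamma,e}$ universally pinched. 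So the blow-up \emph{is} classified, contrary to your premise, and that explicit form is precisely what yields the universal lower bound $D_e v\ge 2c\delta$ on $\{v\ge \frac{1}{2^{\beta+2}a_0}\}$ whenever $e_1\ge\delta$, which your final step needs before Lemma~\ref{ImproveMonotonicity} can fire. Without the global convexity lemma, the rest of your outline has nothing to anchor it.
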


The constants  $a_0$ and $\beta$ come from Lemma \ref{Barrier} and \eqref{Beta}, respectively.
For a vector $x\in\R^d$, its first coordinate in the standard basis is denoted by $x_1.$

For the solution class $\mathcal{P}^F_\infty(0)$, see Definition \ref{SolClass}. 

We first establish several lemmata concerning global solutions. 

\begin{lem}\label{GlobConv}
Suppose that $u\in\mathcal{P}^F_\infty(0)$  for some $F$ satisfying \eqref{Ellipticity}, \eqref{Convexity} and \eqref{HomogeneousF}. Then $u$ is convex. 
\end{lem}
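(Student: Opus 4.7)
The plan is to prove $D_{\xi\xi}u\ge 0$ in $\PosS$ for every $\xi\in\Sph$; the global convexity of $u$ on $\R^d$ then follows by restricting to any segment and observing that along it $u$ is a nonnegative $C^2$ function whose second derivative is $\ge 0$ on the open portion inside $\PosS$ and identically $0$ on the complementary contact portion (the latter because Corollary \ref{HessianBound} combined with continuity forces $D^2u\equiv 0$ on $\partial\PosS$).

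Fix $\xi\in\Sph$ and set $w=D_{\xi\xi}u$. Proposition \ref{DerEquation} gives $L_u(w)\le 0$ in $\PosS$, and Corollary \ref{HessianBound} gives $|w|\le Cu^{\gamma-1}$, so $w$ extends continuously by $0$ to $\partial\PosS$. The key observation is a maximum principle for $L_u(w)=S_{D^2u}(D^2w)-(\gamma-1)u^{\gamma-2}w$: since the zeroth-order coefficient $-(\gamma-1)u^{\gamma-2}$ is strictly negative in $\PosS$, no $L_u$-supersolution can attain a strict negative interior minimum. Indeed, at such a point $D^2w\ge 0$ would yield $S_{D^2u}(D^2w)\ge 0$ by ellipticity, while $-(\gamma-1)u^{\gamma-2}w>0$, forcing $L_u(w)>0$ and contradicting the supersolution property.

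What remains, and is the main obstacle, is to rule out the infimum of $w$ escaping to infinity in the unbounded domain $\PosS$. Assuming by contradiction $w(x_0)<0$ for some $x_0\in\PosS$, I would consider $\phi=w-\epsilon\psi$ with $\psi=au-\lambda|x-x_0|^\beta$, where rescaling Lemma \ref{Barrier} yields $L_u(\psi)\ge 0$ whenever $a\ge a_0\lambda$. Then $L_u(\phi)\le 0$ in $\PosS$, $\phi\ge \epsilon\lambda|x-x_0|^\beta\ge 0$ on $\partial\PosS$, while combining the growth $u(x)\le C|x|^\beta$ from Corollary \ref{GrowthRate} with $|w(x)|\le C|x|^{\beta-2}$ from Corollary \ref{HessianBound} should permit a choice of $a$ and $\lambda$ driving $\phi\to+\infty$ at infinity; the interior maximum principle on $\PosS\cap B_R$ as $R\to\infty$ then forces $\phi\ge 0$ globally, contradicting $\phi(x_0)<0$. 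The delicate part is this calibration of universal constants; if a direct barrier fails, an alternative is to blow down, using homogeneity of $F$: the rescalings $u_R(x)=u(Rx)/R^\beta$ solve the same equation and, by Theorem \ref{FirstResult} and Corollary \ref{HessianBound}, converge subsequentially in $C^{2,\alpha}_{loc}$ to some $u_\infty\in\mathcal{P}^F_\infty(0)$ on which a hypothetical negative value of $D_{\xi\xi}$ at infinity survives at a bounded point, reducing matters to the interior situation already handled.
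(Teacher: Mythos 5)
Your overall framework is right—reduce to proving $D_{\xi\xi}u\ge 0$ in $\PosS$, note $D^2u$ extends by $0$ to the contact set, and exploit the sign of the zeroth-order coefficient of $L_u$ to preclude a \emph{negative interior minimum} for the supersolution $w=D_{\xi\xi}u$—but the step you flag as ``delicate'' is a genuine gap, not a calibration issue. With the barrier $\psi=au-\lambda|x-x_0|^\beta$, Lemma~\ref{Barrier} (after scaling) gives $L_u(\psi)\ge 0$ precisely when $a\ge a_0\lambda$. To make $-\eps\psi\to+\infty$ at infinity you need $\lambda|x-x_0|^\beta$ to dominate $au$, and since $u\le C|x|^\beta$ (with $C$ the universal constant from Corollary~\ref{GrowthRate}) this forces $\lambda>aC\ge a_0\lambda C$, i.e.\ $a_0C<1$. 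There is no reason for $a_0 C<1$, and for points deep inside $\PosS$ where $u$ is comparable to its maximal growth $C|x|^\beta$, $\psi$ in fact tends to $+\infty$, so $\phi=w-\eps\psi$ can go to $-\infty$. So the barrier cannot confine the minimum to a compact set. Your fallback blow-down $u_R(x)=u(Rx)/R^\beta$ is also problematic as stated: a hypothetical bad point $y_R\to\infty$ need not survive at a bounded location for $u_R$ (the points $y_R/R$ can approach the free boundary or escape, and the quantity $D_{\xi\xi}u$ by itself scales by $R^{\beta-2}$ and vanishes in the limit if $|y_R|/R\to 0$).

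The paper handles this by normalizing: instead of $w=D_{\xi\xi}u$ it considers $\inf_{\PosS}D_{11}u/u^{\gamma-1}$, which is finite by Corollary~\ref{HessianBound}. One takes a minimizing sequence $x_h$, rescales by $r_h=u(x_h)^{1/\beta}$ so $v_h=u(r_h\cdot+x_h)/r_h^\beta$ satisfies $v_h(0)=1$, and passes to a limit $w$ for which $g=D_{11}w+\ell w^{\gamma-1}\ge 0$ with interior minimum $g(0)=0$. The crucial point is the inequality $L_w(w^{\gamma-1})\le 0$ in $\{w>0\}$, which uses the sub-differential bound $S_{D^2w}(D^2w)\le F(2D^2w)-F(D^2w)=F(D^2w)$ and thus both convexity and homogeneity of $F$; together with Proposition~\ref{DerEquation} this shows $g$ is an $L_w$-supersolution, and the \emph{strong} maximum principle (not merely the pointwise one at a strict negative minimum) gives the contradiction. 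This ratio-normalization step is precisely what your direct argument is missing—without tying the size of $D_{\xi\xi}u$ to $u^{\gamma-1}$ you cannot preclude the infimum from leaking off to infinity.
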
 
\begin{proof}
We prove that the pure second derivative of $u$ in the $x_1$-direction is non-negative, that is, $$D_{11}u\ge0 \text{ in $\R^d.$}$$ With $u\ge 0$ in $\R^d$, it suffices to prove this in $\PosS$.

Suppose the desired estimate is not true, then we have 
\begin{equation}\label{Ell}-\ell:=\inf_{\PosS}\frac{D_{11}u}{\uga}<0.\end{equation} This infimum is well-defined as $|D^2u/\uga|$ is universally  bounded in $\PosS$ by Corollary \ref{HessianBound}.

We seek a contradiction in several steps. 

In the first step, we define a family of normalized solutions, centered at `worst' points for the ratio in \eqref{Ell}. This family is compact and converges to a limit. In the second step, we compute this ratio at the limiting function, and show that it achieves an interior minimum. In the third step, we get a contradiction by considering the equation for the limit. 

\textit{Step 1: Normalized solutions and compactness.}

Let $x_h\in\PosS$ be a minimizing sequence in the sense that 
\begin{equation}\label{Minimizing}
\frac{D_{11}u}{\uga}(x_h)\to-\ell.
\end{equation} Take $$r_h=u^{\frac{1}{\beta}}(x_h),$$  we define the normalized solutions as $$v_h(x)=\frac{1}{r_h^\beta}u(r_hx+x_h).$$

Homogeneity of $F$ implies that $v_h\in\mathcal{S}^F(B_2(0))$ with $v_h(0)=1$ for all $h$. Theorem \ref{FirstResult} implies that $\{v_h\}$ is uniformly bounded in $C^{2,\alpha}(B_1)$. Up to a subsequence, we have $$v_h\to w \text{ in $C^2(B_1)$}$$for some $$w\in\mathcal{S}^F(B_1)$$ and $w(0)=1.$

\textit{Step 2: An interior minimum.}

Note that $$D_{11}v_h(x)=\frac{1}{r_h^{\beta-2}}D_{11}u(r_hx+x_h),$$ we have, by \eqref{Ell}, 
$$D_{11}v_h(x)\ge\frac{1}{r_h^{\beta-2}}(-\ell u^{\gamma-1}(r_hx+x_h))=-\ell v^{\gamma-1}_h(x)$$ for all $x\in\R^d.$ Passing to the limit gives $$D_{11}w\ge-\ell w^{\gamma-1} \text{ in $B_1.$}$$

On the other hand, $$D_{11}v_h(0)=\frac{1}{r_h^{\beta-2}}D_{11}u(x_h)=\frac{D_{11}u(x_h)}{u(x_h)^{\gamma-1}}$$ by the definition of $r_h.$ Thus \eqref{Minimizing} gives $$D_{11}w(0)=-\ell=-\ell w^{\gamma-1}(0).$$ 

Define $$g=D_{11}w+\ell w^{\gamma-1},$$ then $g\ge 0$ in $B_1$ and $g(0)=0.$

\textit{Step 3: The contradiction.}

A direct computation gives 
$$D^2(w^{\gamma-1})=(\gamma-1)w^{\gamma-2}D^2w+(\gamma-1)(\gamma-2)w^{\gamma-3}\nabla w\otimes\nabla w,$$where $\otimes$ denotes the tensor product. With $\gamma\in(1,2)$, the last term is non-positive. 

Recall that the sub-differential $S_{D^2w}$ in \eqref{SubDiff}  is elliptic, we have 
$$S_{D^2w}(D^2(w^{\gamma-1}))\le (\gamma-1)w^{\gamma-2}S_{D^2w}(D^2w) \text{ in $B_1\cap\{w>0\}.$}$$
Now we use the definition of sub-differentials, the homogeneity of $F$    and \eqref{FirstEquation} to get 
$$ S_{D^2w}(D^2w)\le F(2D^2w)-F(D^2w)=F(D^2w)=w^{\gamma-1}.$$Combine this with the previous estimate, we have $$L_w(w^{\gamma-1})\le 0 \text{ in $B_1\cap\{w>0\},$}$$where $L_w$ is the linearized operator from \eqref{LinearizedOperator}.

Now note that Proposition \ref{DerEquation} implies $$L_w(D_{11}w)\le 0\text{ in $B_1\cap\{w>0\}.$}$$We can use $\ell>0$ to get $$L_w(g)\le 0\text{ in $B_1\cap\{w>0\}.$}$$

 This contradicts the strong maximum principle since $g$ achieves its minimum at an interior point of $B_1\cap \{w>0\}.$
\end{proof} 

The next lemma classifies global solutions with nontrivial \textit{conic} contact sets. 
\begin{lem}\label{HSSolution}
Let $u$ be as in the previous lemma. 

If the contact set $\ZeroS$ is a cone with $|\ZeroS|>0$, then 
 $$u(x)=c_{\gamma,e} [(x\cdot e)_+]^\beta,$$ for some $e\in \Sph$ and $c_{\gamma,e}\in\R$ satisfying $0<c\le c_{\gamma,e}\le C$ for some universal constants $c$ and $C$.
\end{lem}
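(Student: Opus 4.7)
The plan is to show the contact set $K := \ZeroS$ is a half-space and then reduce the equation to a one-variable ODE. By Lemma \ref{GlobConv}, $u$ is globally convex, so $K$ is closed and convex. Combined with the hypotheses that $K$ is a cone and $|K| > 0$, $K$ is a closed convex cone with non-empty interior; and $K \neq \R^d$ since $0 \in \partial\PosS$.

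To show $K$ is a half-space, let $L := K \cap (-K)$ be its lineality space, and aim for $\dim L = d - 1$. For any $v \in L$ we have $K + tv = K$ for all $t \in \R$, so $u$ vanishes to all orders in the $v$-direction on $K$. Setting $w := D_v u$, both $v$ and $-v$ lie in $L$, and Proposition \ref{DerEquation} gives $L_u(w) \le 0$ and $L_u(-w) \le 0$ in $\PosS$, hence $L_u(w) = 0$ there (the operator $L_u$ being linear in its argument, as $S_{D^2u}$ is linear). Since $w$ also vanishes on $\partial\PosS$ via $\nabla u = 0$, a Liouville-type argument in the unbounded domain $\PosS$, using the universal Hessian bound from Corollary \ref{HessianBound} to control $w$ at infinity, should force $w \equiv 0$ and hence $u$ to be translation-invariant along $L$. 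Consequently $u$ descends to a convex solution on $L^\perp$ whose contact set is the pointed cone $K' := K \cap L^\perp$. If $\dim L^\perp \ge 2$, then $K'$ has a genuine corner at the origin; I would derive a contradiction by localizing near a smooth point of $\partial K' \setminus \{0\}$, where non-degeneracy (Corollary \ref{NonDeg}), conic invariance, and convexity together force $u$ to behave like two distinct half-space profiles coming from the two sides of the corner, incompatible with the universal upper bound of Corollary \ref{GrowthRate}. Thus $\dim L = d - 1$ and $K = \{x \cdot e \le 0\}$ for some $e \in \Sph$.

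With $u(x) = \phi(x \cdot e)$ for some $\phi : \R \to [0, \infty)$ satisfying $\phi \equiv 0$ on $(-\infty, 0]$, the equation \eqref{FirstEquation} becomes $F(\phi''(t)\, e \otimes e) = \phi(t)^{\gamma - 1}$ for $t > 0$, which by the homogeneity \eqref{HomogeneousF} collapses to the ODE $F(e \otimes e)\, \phi''(t) = \phi(t)^{\gamma - 1}$ with over-determined data $\phi(0) = \phi'(0) = 0$. Elementary ODE analysis yields the unique nontrivial non-negative solution $\phi(t) = c_{\gamma, e} t^\beta$ with $c_{\gamma, e}^{2 - \gamma} = \beta(\beta - 1)\, F(e \otimes e)$, and the ellipticity estimate $\frac{1}{\Lambda} \le F(e \otimes e) \le \Lambda$ gives the universal bounds on $c_{\gamma, e}$. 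The main obstacle is the rigidity step excluding pointed conic contact sets: for the Laplacian this is handled by the Weiss monotonicity formula, which is unavailable here, so one must work directly with convexity and the quantitative growth/non-degeneracy estimates of Section 3.
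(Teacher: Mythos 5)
Your overall skeleton — convexity from Lemma \ref{GlobConv}, reduction of the contact cone to a half-space, then the one-variable ODE via homogeneity — matches the paper, and your ODE computation and the resulting bound $c_{\gamma,e}^{2-\gamma}\beta(\beta-1)F(e\otimes e)=1$ are correct. But there is a genuine gap in the middle step, which you yourself flag as ``the main obstacle'': you never actually rule out a pointed conic contact set with $\dim L^\perp\ge 2$. The heuristic you sketch — localize near a smooth point of $\partial K'\setminus\{0\}$ and get ``two distinct half-space profiles'' contradicting Corollary \ref{GrowthRate} — does not hold up: at a smooth boundary point there is a single tangent hyperplane, so a single half-space profile is what one expects, and nothing there collides with the upper growth bound. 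The contradiction one needs must come from the corner structure of the cone, and the paper extracts it with a different mechanism: for any $-e\in\Sph\cap\partial\ZeroS$, convexity gives $D_eu\ge0$ globally, then Proposition \ref{DerEquation} and the strong maximum principle force the dichotomy $D_eu\equiv 0$ or $D_eu>0$ in $\PosS$; in the second case one uses Lemma \ref{ImproveMonotonicity} (improvement of monotonicity) on nearby directions $\nu$ to conclude $-\nu\in\ZeroS$ for a whole neighborhood of $e$, contradicting $-e\in\partial\ZeroS$. This is precisely the Weiss-monotonicity-replacement you were looking for, and your proposal does not supply an alternative to it.

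A secondary point: your invocation of a ``Liouville-type argument'' with the Hessian bound of Corollary \ref{HessianBound} to show $D_vu\equiv 0$ for $v$ in the lineality space is unnecessary. If $v,-v\in\ZeroS$ and the contact set is a cone, then $u(\pm tv)=0$ for all $t>0$, and global convexity alone gives $D_vu\ge0$ and $D_{-v}u\ge0$ everywhere (take $y=\mp tv$ in the subgradient inequality $u(y)\ge u(x)+\nabla u(x)\cdot(y-x)$ and send $t\to\infty$), hence $D_vu\equiv0$. No Liouville theorem or Hessian control is needed, and the same elementary convexity argument is exactly what the paper uses to launch its boundary-direction dichotomy.
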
 Recall that for a real number $a$, we denote by $a_+$ its positive part. 

\begin{proof}
By Lemma \ref{GlobConv}, we know that $u$ and $\ZeroS$ are convex. 

We first establish the following property of the contact set:

\textit{Claim:} For each $-e\in\Sph\cap\partial\ZeroS$, we have $e\in\ZeroS.$

With $-e\in\ZeroS$ and $\ZeroS$ being a cone, we have $$u(-te)=0 \text{ for all $t>0.$}$$ Since $u\ge0$   and $u$ is convex, it is elementary that   $$D_eu\ge 0 \text{ in $\R^d$.}$$ 

With Proposition \ref{DerEquation},  there are only two options, by the strong maximum principle: \begin{equation}\label{AorB}
\text{either } D_eu=0 \text{ in } \R^d, \text{ or } D_eu>0 \text{ in } \PosS.
\end{equation}  

In the first case, the \textit{Claim} holds trivially. We consider the second case. 

In this case,  continuity of $D_eu$ implies $$D_eu\ge2\kappa \text{ in $B_2\cap\{u\ge\frac{1}{2^{\beta+1}a_0}\}$}$$ for some $\kappa>0.$ Here $a_0$ is the constant from Lemma \ref{Barrier}. 

Consequently, we can find $\delta>0$, depending on $\kappa$, such that $D_\nu u$ satisfies the hypothesis in Lemma \ref{ImproveMonotonicity} whenever $|\nu-e|<\delta$. It follows that $$D_\nu u\ge 0 \text{ in $B_1$.}$$With $0\in\ZeroS$, this implies $$-\nu\in\ZeroS\text{ for all }|\nu-e|<\delta,$$ contradicting the assumption that $-e\in\partial\ZeroS.$

Therefore, the second case in \eqref{AorB} cannot happen. This completes the proof of the \textit{Claim}.

Now that $\ZeroS$ is a nontrivial convex cone satisfying the \textit{Claim}, it must be a half-space, that is $$\ZeroS=\{x\cdot e\le  0\}$$ for some $e\in\Sph.$

In particular, we have $$\{x\cdot e=0\}\subset\ZeroS.$$ This implies, by convexity of $u$, that $u$ only depends on $x\cdot e.$

Now an ODE argument gives that $u$ is of the form $u=c_{\gamma,e} [(x\cdot e)_+]^\beta,$ with a constant $c_{\gamma,e}$ satisfying 
$c^{2-\gamma}\beta(\beta-1)F(e\otimes e)=1.$ Using ellipticity of $F$, we see that this is  bounded away from $0$ and infinity by universal constants. 
 \end{proof} 

With these preparations, we give 
\begin{proof}[Proof of Proposition \ref{BlowUp}]
Take a sequence $r_h\to0$, we define the rescalings $$u_h(x)=\frac{1}{r_h^\beta}u(r_hx).$$ Up to a subsequence, $$u_h\to v \text{ locally uniformly in $C^2(\R^d)$}$$ for some $v\in\mathcal{P}^F_\infty(0).$

By Lemma \ref{GlobConv}, the contact set $\ZeroS$ is a nontrival convex set. It follows that $\{v=0\}$ is a convex cone. 

Lemma \ref{HSSolution} implies, up to a rotation, $$v=c_\gamma [(x_1)_+]^\beta.$$

This function satisfies $$D_ev\ge 0 \text{ for all $e\in\Sph$ with $e_1\ge 0$.}$$  Moreover, there is a universal constant, $c>0$, such that $$D_ev\ge 2c\delta\text{ in $\{v\ge\frac{1}{2^{\beta+2}a_0}\}$ whenever $e_1\ge\delta.$}$$

By the convergence of $u_h\to v$, we have, for large $h$, 
$$\{u_h\ge\frac{1}{2^{\beta+1}a_0}\}\cap B_2\subset \{v\ge\frac{1}{2^{\beta+2}a_0}\}.$$ Together with the convergence of derivatives, we have, for large $h$,  $$D_eu_h\ge c\delta \text{ in $\{u_h\ge\frac{1}{2^{\beta+1}a_0}\}\cap B_2$}$$ whenever $e_1\ge\delta.$

Since $D_eu_h\to D_ev$ uniformly in $B_2$ and $D_ev\ge 0$ for all $e_1\ge 0$, we can invoke Lemma \ref{ImproveMonotonicity} to get, for large $h$,  $$D_eu_h \ge 0\text{ in $B_1$ whenever $e_1\ge\delta.$}$$

In summary, we have shown that, for large $h$, 
$$D_eu_h\ge c\delta \text{ in $\{u_h\ge\frac{1}{2^{\beta+1}a_0}\}\cap B_1$}$$ and $$D_eu_h\ge 0 \text{ in $B_1$}$$whenever $e_1\ge\delta.$

Fix one such large $h$, say $h_0$, we see that the desired comparisons hold for $u$ in $B_{r_{h_0}}.$\end{proof}

\section{Regularity of the free boundary}
In this final section, we give the proof of Theorem \ref{SecondResult}. 

We first prove the relative openness of $Reg(u)$ in $\partial\PosS$, which is the content of the following result:
\begin{prop}\label{RelativeOpen}Let $F$ be an operator satisfying \eqref{Ellipticity} and \eqref{Convexity} together with either \eqref{Dable} or \eqref{HomogeneousF}.

Suppose that $u\in \mathcal{P}^F_1(0)$ with $0\in Reg(u).$ 

Then there is $\rho>0$ such that $B_\rho\cap\partial\PosS\subset Reg(u).$
\end{prop}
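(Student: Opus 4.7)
The plan is to use the directional monotonicity supplied by Proposition \ref{BlowUp} to propagate the contact set from $0$ to every nearby free boundary point along a uniform cone of directions, which then produces uniform lower density of $\{u=0\}$ at those nearby points.

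First, apply Proposition \ref{BlowUp} at the regular point $0$ with the concrete choice $\delta=1/2$. After a rotation we obtain a radius $r>0$ such that
\[
D_e u \ge 0 \quad\text{in } B_r \qquad\text{for every } e\in\Sph \text{ with } e_1\ge 1/2.
\]
Set $\rho := r/2$, fix any $x_0\in B_\rho\cap\partial\PosS$, and pick any unit vector $e$ with $e_1\ge 1/2$. For $s\in[0,r/2]$ the segment $x_0-se$ stays inside $B_r$, so the function $s\mapsto u(x_0-se)$ is nonincreasing on $[0,r/2]$. Combined with $u(x_0)=0$ and $u\ge 0$, this forces $u(x_0-se)=0$ for all such $s$.

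Therefore the truncated cone
\[
K \;:=\; \bigl\{\,x_0-se \,:\, 0\le s\le r/2,\ e\in\Sph,\ e_1\ge 1/2\,\bigr\}
\]
is entirely contained in $\ZeroS$. Since $K$ is a truncated solid cone with apex at $x_0$, there is a dimensional constant $c_d>0$ such that $|K\cap B_s(x_0)|\ge c_d s^d$ for every $0<s\le r/2$. In particular,
\[
\limsup_{s\to 0}\frac{|B_s(x_0)\cap\ZeroS|}{s^d}\;\ge\; c_d\;>\;0,
\]
so $x_0\in Reg(u)$, and the proposition holds with this choice of $\rho$.

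The only nontrivial input is Proposition \ref{BlowUp}, whose conclusion — monotonicity in a whole open cone of directions on a definite ball around $0$ — is precisely what allows us to get not just positive density at $0$ but a \emph{uniform} cone of contact points at every nearby free boundary point. Once that is in hand, the remaining step is purely geometric, and I do not anticipate any further obstacles; no compactness or contradiction argument is needed, and no additional assumption on $F$ beyond what is already assumed to run Proposition \ref{BlowUp} is used.
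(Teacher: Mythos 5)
There is a genuine gap at the very first step. Proposition~\ref{BlowUp} is stated for \emph{global} solutions $u\in\mathcal{P}^F_\infty(0)$ with $|\{u=0\}|>0$, whereas the $u$ in Proposition~\ref{RelativeOpen} lies only in $\mathcal{P}^F_1(0)$, i.e., it solves the equation merely in $B_1$. You cannot ``apply Proposition~\ref{BlowUp} at the regular point $0$'' to $u$ itself: neither the globality nor the positive--measure hypothesis is available for $u$, only the $\limsup$ density condition \eqref{PositiveDensity}. What the paper does is apply Proposition~\ref{BlowUp} to a \emph{blow-up limit} $u_0$ of $u$, which is an honest element of $\mathcal{P}^F_\infty(0)$ with $|\{u_0=0\}|>0$ along a suitably chosen subsequence $r_h\to0$. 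The real work is then in transferring the cone of monotonicity from $u_0$ back to the rescalings $u_h$ and hence to $u$.

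That transfer is not automatic, and this is the second step your argument is missing. Locally uniform $C^2$ convergence $u_h\to u_0$ only degrades $D_eu_0\ge0$ on $B_r$ to $D_eu_h\ge-\eps$ on $B_r$ for large $h$; this is not yet a sign condition. To upgrade $-\eps$ to $0$ one needs the quantitative lower bound $D_eu_0\ge c_0\delta r^{\beta-1}$ on the superlevel set $\{u_0\ge \tfrac{1}{2^{\beta+1}a_0}r^\beta\}$ also provided by Proposition~\ref{BlowUp}, and then Lemma~\ref{ImproveMonotonicity} to conclude $D_eu_h\ge0$ in $B_{r/2}$; fixing a single large $h_0$ and scaling back then yields $D_eu\ge0$ in $B_{\rho}$ with $\rho=\tfrac12 r_{h_0}r$. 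Once that directional monotonicity of $u$ on a genuine ball is in hand, the remainder of your argument (the truncated cone of zeros at each nearby free boundary point, giving uniform lower density) is a correct and slightly more explicit variant of the paper's Lipschitz-graph conclusion. So the geometric second half is fine; the omission is the blow-up plus improvement-of-monotonicity bridge that makes the monotonicity statement true for $u$ rather than only for its blow-up limit.
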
 For the definition of the solution class and of the regular part, see Definitions \ref{SolClass} and \ref{RegSing}.

\begin{proof}
As we remarked before Proposition \ref{BlowUp}, we can find a sequence $r_h\to 0$, such that the the rescalings $$u_{h}(x)=\frac{1}{r_h^\beta}u(r_hx)\to u_0 \text{ locally uniformly in $C^2(\R^d)$}$$ for some $u_0$ as in Proposition \ref{BlowUp}.

In particular, by taking $\delta=\frac{1}{2}$ as in Proposition \ref{BlowUp}, we find $r>0$ such that 
\begin{equation*}\begin{cases}D_eu_0\ge 0 &\text{ in $B_{r}$,}\\ D_eu_0\ge \frac{1}{2}c_0 r^{\beta-1} &\text{ in $B_{r}\cap\{u_0\ge\frac{1}{2^{\beta+1}a_0}r^\beta$\}}
\end{cases}
\end{equation*}for all $e\in\Sph$ with $e_1\ge 1/2$.

For $\eps>0$ to be chosen, locally uniform $C^2$ convergence then gives, for large $h$, 
\begin{equation*}\begin{cases}D_eu_h\ge -\eps &\text{ in $B_{r}$,}\\ D_eu_h\ge \frac{1}{4}c_0 r^{\beta-1} &\text{ in $B_{r}\cap\{u_h\ge\frac{1}{2^{\beta}a_0}r^\beta$\}}
\end{cases}
\end{equation*}for all $e\in\Sph$ with $e_1\ge 1/2$. 

If we choose $\eps$, depending on $c_0$, according to Lemma \ref{ImproveMonotonicity}, this gives 
$$D_eu_h\ge 0 \text{ in $B_{\frac{1}{2}r}$}$$ for all $e_1\ge 1/2$ if $h$ is large. 

Fix such a large $h_0.$
Scale back to $u$, this gives a positive $\rho=\frac{1}{2}r_{h_0}r>0$ such that $$D_eu\ge 0 \text{ in $B_{\rho}$}$$ for all $e_1\ge 1/2$.

It is now elementary to see that there is a function $$f: B_\rho\cap\{x_1=0\}\to\R,$$ with universal Lipschitz norm such that $$\ZeroS\cap B_\rho=\{x_1\le f(x')\} \text{ and } \PosS\cap B_{\rho}=\{x_1> f(x')\}.$$

Since the contact set $\ZeroS$ in $B_\rho$ is the region below a Lipschitz graph, all \FB points in $B_\rho$ satisfy \eqref{PositiveDensity}.
\end{proof} 

Now we show that $Reg(u)$ is locally a $C^1$-hypersurface.

\begin{prop}
Under the same assumptions as in the previous proposition, we can find $\rho>0$ such that $B_\rho\cap\partial\PosS$ is a $C^1$-hypersurface.
\end{prop}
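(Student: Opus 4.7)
After shrinking $\rho$ if necessary, the conclusions of Proposition~\ref{RelativeOpen}, together with the Lipschitz graph representation $\partial\PosS \cap B_\rho = \{x_1 = f(x')\}$ and $\PosS \cap B_\rho = \{x_1 > f(x')\}$ established in the previous proof, both hold. Every point of $B_\rho \cap \partial\PosS$ is therefore a regular free boundary point. The plan is to apply Proposition~\ref{BlowUp} at each such point, translate the resulting monotonicity into a sharp cone structure, use that to get pointwise differentiability of $f$, and finally compare cone conditions at nearby base points to get continuity of the derivative.

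For the pointwise differentiability, fix $x_0 \in B_\rho \cap \partial\PosS$. Proposition~\ref{BlowUp} gives a direction $\nu(x_0) \in \Sph$ such that, for every $\delta > 0$, there is $r = r(x_0, \delta) > 0$ with $D_e u \geq 0$ in $B_r(x_0)$ for all $e$ satisfying $e \cdot \nu(x_0) \geq \delta$. Combined with $u(x_0) = 0$ and $u \geq 0$, this forces the backward cone
$$\{x_0 + z : z \cdot \nu(x_0) \leq -\delta|z|\} \cap B_r(x_0) \subset \ZeroS.$$
The non-degeneracy Corollary~\ref{NonDeg} supplies a point of $\PosS$ in each small ball about $x_0$, and propagating positivity along monotone directions places the opposite forward cone inside $\PosS$. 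Combined with the Lipschitz graph structure, this pinches $\partial\PosS \cap B_{r/2}(x_0)$ between two hyperplanes through $x_0$ with normal $\nu(x_0)$ and aperture $\delta$. Letting $\delta \to 0$ yields differentiability of $f$ at $x_0'$, with gradient determined by $\nu(x_0)$.

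For the continuity of the map $x_0 \mapsto \nu(x_0)$, fix $x_0$ and $\delta > 0$, and let $r_0 = r(x_0, \delta)$. Given $x_n \to x_0$ along the free boundary, elementary geometry converts the backward cone condition at $x_0$ into a backward cone condition at $x_n$ with axis $\nu(x_0)$ and aperture $\delta + \eps_n$, where $\eps_n \to 0$. Comparing this with the sharp cone condition at $x_n$ along axis $\nu(x_n)$, whose aperture can be made arbitrarily small at sufficiently small scales, forces $|\nu(x_n) - \nu(x_0)| \leq C(\delta + \eps_n)$; sending $n \to \infty$ and then $\delta \to 0$ gives $\nu(x_n) \to \nu(x_0)$. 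Therefore $f \in C^1$.

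The main obstacle is the continuity step: one must track quantitatively how an $r$-scale cone at $x_0$ restricts to a slightly smaller cone at the shifted apex $x_n$, and then use the uniqueness of the cone axis provided by the strict inequality $D_e u \geq c_0 \delta r^{\beta-1}$ in the second line of \eqref{BlowUpMonotonicity} to pin down $\nu(x_n)$. The first two steps, by contrast, are essentially direct readouts of Proposition~\ref{BlowUp} together with the Lipschitz graph representation established in the previous proposition.
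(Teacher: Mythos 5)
Your overall strategy --- invoke Proposition~\ref{BlowUp} at each regular free boundary point, read off cone monotonicity with arbitrarily small aperture at small scales, deduce pointwise differentiability, and then compare normals at nearby points --- is the same as the paper's in spirit, and your differentiability step is correct. But the continuity step as written has a genuine gap. You claim that ``elementary geometry converts the backward cone condition at $x_0$ into a backward cone condition at $x_n$ with axis $\nu(x_0)$ and aperture $\delta+\eps_n$.'' Purely from set inclusions this is false: a $\delta$-aperture cone at $x_0$ translates only to $|f(x')-f(x_0')|\le\delta|x'-x_0'|$, and re-centering at the base point $x_n'$ gives $|f(x')-f(x_n')|\le\delta|x'-x_n'|+2\delta|x_n'-x_0'|$ --- a thickened cone with a non-scaling additive defect, not a sharp cone at $x_n$. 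The correct fix is not $\eps_n$-bookkeeping on cones but the observation that the monotonicity $D_e u\ge 0$ holds in the full ball $B_{r_0}(x_0)$; once $|x_n-x_0|<r_0/2$ this ball contains $B_{r_0/2}(x_n)$, so $D_e u\ge 0$ there, which gives a $\delta$-aperture cone (same $\delta$) at $x_n$ directly. With this fix you also do not need the strict lower bound from the second line of \eqref{BlowUpMonotonicity}.

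The paper's own continuity argument is cleaner and avoids comparing cones at different apexes altogether. The key monotonicity statement \eqref{MonProp}, that $D_e u\ge 0$ in $B_{\rho_\delta}$ for all $e_1\ge\delta$, makes the free boundary a $\delta$-Lipschitz graph over the hyperplane $\{x_1=0\}$ throughout $B_{\rho_\delta}$. Hence every normal $\nu_x$ for $x\in B_{\rho_\delta}\cap\partial\PosS$ already satisfies $|\nu_x-\xi_1|\le C\delta$, and continuity of normals at $0$ follows by sending $\delta\to0$, with the same argument at nearby points. I would recommend adopting that structure rather than patching the cone-translation step.
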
 
\begin{proof}
By using the same argument as in the previous proof, but with a general $\delta>0$ instead of $\delta=\frac{1}{2}$, we have the following:
\begin{equation}\label{MonProp}\text{Given }\delta>0, \text{ there is }\rho_\delta>0 \text{ such that }D_eu\ge 0 \text{ in } B_{\rho_\delta} \text{ for all $e_1\ge\delta.$}\end{equation}

Recall from the proof of the previous proposition, we have that $\partial\PosS$ is the graph of some function $f$. Property \eqref{MonProp} implies that $|f(x')-f(0)|\le\delta|x'|$ for all $|x'|<\rho_\delta.$
By sending $\delta\to0$, this shows that the function $f$ is differentiable at $0$ with $0$ derivative. In terms of the free boundary, this implies that $\partial\PosS$ is differentiable at $0$ with normal $\xi_1$, the unit vector in the $x_1$-direction.

By using this argument at different points, this shows that $\partial\PosS$ is a differentiable hypersurface near $0$, with a well-defined normal $\nu_x$ at each $x\in\partial\PosS.$

Again by the monotonicity property \eqref{MonProp}, we see that inside $B_{\rho_\delta}$, $$|\nu_x-\xi_1|\le C\delta.$$By sending $\delta\to 0,$ this gives the continuity of normals at $0$. The same argument can be applied to all nearby \FB points. 
\end{proof}

\section*{Acknowledgement} HY would like to thank Connor Mooney for sharing his insight about boundary Harnack principles during the preparation of this manuscript.  Both authors would like to thank two referees for the careful reading of the first draft and for providing many insightful suggestions.



\begin{thebibliography}{100}

\bibitem[A]{A} R. Aris, {\em The mathematical theory of diffusion and reaction in permeable catalysts}, Oxford University Press, 1975.

\bibitem[AC]{AC} H.W. Alt, L. Caffarelli, {\em Existence and regularity for a minimum problem with free boundary}, J. Reine Angew. Math. 368 (1981), 105-144.


\bibitem[AP]{AP} H.W.  Alt, D. Phillips, {\em A free boundary problem for semilinear elliptic equations}, J. Reine Angew. Math. 368 (1986), 63-107.

\bibitem[AT]{AT} D. Ara\'ujo, E. Teixeira, {\em Geometric approach to nonvariational singular elliptic equations}, Arch. Ration. Mech. Anal. 209 (2013), no. 3, 1019-1054.

\bibitem[B]{B} L. Bonorino, {\em Regularity of the \FB for some elliptic and parabolic problems II}, Comm. Partial Differential Equations 26 (2001), no. 3-4, 355-380.

\bibitem[BBLT]{BBLT} L. Bonorino, E. Brietzke, J. Lukaszczyk, C. Taschetto, {\em Properties of the period function for some Hamiltonian systems and homogeneous solutions of a semilinear elliptic equation}, J. Differential Equations 214 (2005), no. 1, 156-175.


\bibitem[C1]{C1} L. Caffarelli, {\em The regularity of free boundaries in higher dimensions}, Acta Math. 139 (1977), 155-184.
\bibitem[C2]{C2} L. Caffarelli, {\em The \OP revisited}, J. Fourier Anal. Appl. 4 (1998), 383-402.
\bibitem[C3]{C3} L. Caffarelli, {\em Interior a priori estimates for solutions of \FN equations}, Annals of Mathematics 130 (1989), 189-213.

\bibitem[CC]{CC} L. Caffarelli, X. Cabr\'e, {\em Fully nonlinear elliptic equations}, AMS Colloquium Publications, 43, AMS Providence RI, 1995. 

\bibitem[CDV]{CDV} L. Caffarelli, L. Duque, H. Vivas, {\em The two membranes problem for \FN operators}, Discrete Contin. Dyn. Syst. 38 (2018), no. 12, 6015-6027.

\bibitem[CS]{CS} L. Caffarelli, S. Salsa, {\em A geometric approach to free boundary problems}, Graduate Studies in Mathematics, 68. American Mathematical Society, Providence, RI, 2005.

\bibitem[CSV]{CSV} M. Colombo, L. Spolaor, B. Velichkov, {\em A logarithmic epiperimetric inequality for the obstacle problem}, Geom. Funct. Anal. 28 (2018), 1029-1061.

\bibitem[D]{D} D. De Silva, {\em Free boundary regularity for a problem with right hand side}, Interfaces Free Bound. 13 (2011), no. 2, 223-238.

\bibitem[DFS]{DFS}  D. De Silva, F. Ferrari, S. Salsa, {\em Free boundary regularity for fully nonlinear non-homogeneous two-phase problems}, J. Math. Pures Appl. 103 (2015), no. 3, 658-694.

\bibitem[DS1]{DS} D. De Silva, O. Savin, {\em On certain degenerate one-phase free boundary problems}, preprint: arXiv:1912.06551v1

\bibitem[DS2]{DS2} D. De Silva, O. Savin, {\em Global solutions to nonlinear two-phase \FB problems}, Comm. Pure Appl. Math. 72 (2019), no. 10, 2031-2062.


\bibitem[E]{E} L.C. Evans, {\em Classical solutions of \FN convex second order elliptic equations}, Comm. Pure Appl. Math. 25 (1982), 333-363.

\bibitem[ESV]{ESV} M. Engelstein, L. Spolaor, B. Velichkov, {\em Uniqueness of the blowup at isolated singularities for the Alt-Caffarelli functional}, Duke Math. J. 169 (2020), no. 8, 1541-1601.



\bibitem[FiSh]{FiSh} A. Figalli, H. Shahgholian, {\em A general class of \FB problems for fully nonlinear elliptic equations}, Arch. Ration. Mech. Anal. 213 (2014), 269-286. 

\bibitem[FiSe]{FiSe} A. Figalli, J. Serra, {\em On the fine structure of the free boundary for the classical obstacle problem}, Invent. Math. 215 (2019), no. 1, 311-366.

\bibitem[FoSh]{FoSh} M. Fotouhi, H. Shahgholian, {\em A semilinear PDE with free boundary}, Nonlinear Anal. 151 (2017), 145-163.

\bibitem[I]{I} E. Indrei, {\em Boundary regularity and nontransversal intersection for the \FN obstacle problem}, Comm. Pure Appl. Math. 72 (2019), no. 7, 1459-1473.


\bibitem[IM]{IM} E. Indrei, A. Minne, {\em Regularity of solutions to fully nonlinear elliptic and parabolic free boundary problems}, Ann. Inst. H. Poincar\'e Anal. Non Lin\'eare 33 (2016), 1259-1277.

\bibitem[K]{K} N.V. Krylov, {\em Boundedly nonhomogeneous elliptic and parabolic equations in a domain}, Izv. Akad. Nak. SSSR Ser. Mat. 47 (1983), 75-108; English transl. in Math. USSR Izv. 22 (1984), 67-97.


\bibitem[L]{L} K. Lee, {\em Obstacle problems for the fully nonlinear elliptic operators}, Thesis (PhD), New York University, 1998. 

\bibitem[P]{P} D. Phillips, {\em A minimization problem and the regularity of solutions in the presence of a \FB}, Indiana Univ. Math. J. 32 (1983), no.1, 1-17.


\bibitem[PSU]{PSU} A. Petrosyan, H. Shahgholian, N. Uraltseva, {\em Regularity of free boundaries in obstacle-type problems}, Graduate Studies in Mathematics, 136, AMS Providence RI, 2012.

\bibitem[RS]{RS} X. Ros-Oton, J. Serra, {\em The structure of the \FB in the fully nonlinear thin obstacle problem}, Adv. Math. 316 (2017), 710-747.

\bibitem[RT]{RT} G. Ricarte, E. Teixeira, {\em Fully nonlinear singularly perturbed equations and asymptotic free boundaries,} J. Funct. Anal. 261 (2011), 1624-1673.

\bibitem[S]{S} J. Spruck, {\em Uniqueness in a diffusion model of population biology}, Comm. Partial Differential Equations 8 (1983), no. 15, 1605-1620.

\bibitem[SY]{SY} O. Savin, H. Yu, {\em Regularity of the singular set in the \FN obstacle problem}, to appear in J. Eur. Math. Soc.

\bibitem[T]{T} E. Teixeira, {\em Regularity for the fully nonlinear dead-core problem}, Math. Ann. 364 (2016), no. 3-4, 1121-1134.


\bibitem[W1]{W} G. Weiss, {\em Partial regularity for weak solutions of an elliptic \FB problem}, Comm. Partial Differential Equations 23 (1998), no. 3-4, 439-455.

\bibitem[W2]{W2} G. Weiss, {\em A homogeneity improvement approach to the obstacle problem}, Invent. Math. 138 (1999), no. 1, 23-50.



\end{thebibliography}
\end{document}